\numberwithin{equation}{section}
\theoremstyle{plain}
\newtheorem{theo}{Theorem}
\newtheorem{thm}[theo]{Theorem}
\newtheorem{lem}[theo]{Lemma}
\newtheorem{prop}[theo]{Proposition}
\newtheorem{cor}[theo]{Corollary}
\theoremstyle{plain}
\newtheorem{nontheo}{nonTheorem}
\newtheorem{ques}[nontheo]{Question}
\newcommand{\Z}{\mathbb{Z}}
\newcommand{\R}{\mathbb{R}}
\newcommand{\lattice}{\Lambda}
\newcommand{\rect}{\mathcal{R}}
\newcommand{\M}{\mathcal{M}}
\newcommand{\sphere}{\mathcal{S}}
\newcommand{\nsphere}{\mathcal{S}^{n-1}}
\newcommand{\area}{\text{area}}
\newcommand{\nball}{\mathcal{B}_{n}}
\newcommand{\A}{\mathcal{A}}
\newcommand{\norm}[1]{\| #1 \|}
\renewcommand\bar[1]{\overline{#1}}
\newcommand{\diag}{\textnormal{diag}}
\renewcommand{\S}{\mathcal{S}}
\title{Solid angles associated to Minkowski~reduced~bases}
\subjclass[2010]{primary: 11H06, secondary: 11H55, 11H50, 52C07}
\keywords{lattice, reduced basis, solid angle, Minkowski, reduction theory, quadratic form, bounds, spherical integration}
\author{Danny Nguyen}
\address{Department of Mathematics,  
University of California, Los Angeles, 
520 Portola Plaza,
Los Angeles, CA 90095}
\email{ldnguyen@math.ucla.edu}
\begin{document}

\begin{abstract}
Given a lattice $\lattice \subset \R^n$, we consider its Minkowski reduced basis and the solid angle $\Omega$ spanned by the basis vectors. Such a basis satisfies strong near-orthogonality conditions, which allow us to bound  from above and below the measure of $\Omega$. Sharp upper and lower bounds are derived for all rank $3$ and rank $4$ lattices so that $\Omega$ always measures in between. Extreme cases happen when $\lattice$ is similar to the rectangular ($\mathcal{R}$) or alternating ($\A$) lattice. This result settles a question raised earlier by Fukshansky and Robins in connection to sphere packings and kissing numbers. The proof relies on a formula by Hajja and Walker that expresses $\Omega$ as a product of $\det(\lattice)$ and a quadratic integral on the unit sphere $\sphere^{n-1}$. Finally, we show that for rank 5, the alternating lattice $\A_{5}$ no longer possesses the smallest measure for $\Omega$. 
\end{abstract}

\maketitle

\section{Review of the problem}\label{intro}

A rank $n$ lattice $\lattice$ is a discrete subgroup of $\R^n$ generated by $n$ linearly independent vectors. Such a set of vectors is a basis of $\lattice$ under integer linear combinations. As in the case of general groups, the choice of the basis is non-unique. However, since $\lattice$ carries the Euclidean metric, a reduction process can be applied to yield a "shortest" or \emph{minimal basis}. There have been many reduction processes devised for this purpose, most notably Minkowski, Korkine-Zolotarev and LLL reductions. In this paper, by a minimal or reduced basis we always mean one that results from Minkowski's reduction process. We describe this simple reduction process below.
\medskip

A set of $n$ vectors $\{v_1,\dots,v_n\}$ form a Minkowski reduced basis if $v_1$ is the shortest non-zero vector in $\Lambda$ and for each $1<k\leq n$, $v_k$ is the shortest vector that makes $v_1,\dots,v_k$ is \textit{extendable} to a full basis of $\Lambda$. To put in another way, $\{ v_1,\dots,v_n \}$ must generate $\Lambda$ by integer linear combinations, and if $(x_1,\dots,x_n) \in \Z^n$ is any $n$-tuple satisfying $\gcd(x_k,\dots,x_n)=1$ for some $1 \leq k \leq n$, then $\| v_k \| \leq \| \sum{x_i v_i} \|$. Note that for each $v_i$, there are more than one shortest vector available ($-v_i$ for example). So when we refer to a minimal basis, we mean one among many available minimal bases. As a special case, when $\Lambda$ has a basis consisting of orthogonal vectors, it is automatically a reduced basis. In the general case, a reduced basis is the closest to an orthogonal basis that a lattice can have.
\medskip

Motivated by various extremal geometric problems including sphere packings and kissing numbers, for which the optimal solution often involves a minimal basis of some special lattice, Fukshansky and Robins \cite{FukRob} posed a question on finding sharp bounds for the \textit{solid angles} associated to such minimal bases. Given a minimal basis of $n$ vectors $v_1,\dots,v_n$, they generate a cone $K = \{x_1 v_1 + \dots + x_{n} v_{n} : x_{i} \in \R^{+} \}$ and the associated solid angle $\Omega$ is defined as:
\[
\Omega = \area(K \cap \nsphere),
\]
where $\area(\cdot)$ denotes the $(n-1)$-dim spherical area on $\nsphere$. The question raised in \cite{FukRob} is:

\begin{ques}\label{mainques}
Find absolute constants $C_{1}$ and $C_{2}$ so that every rank $3$ lattice has a minimal basis with the associated solid angle $\Omega$ satisfying $C_{1} \le \Omega \le C_{2}$?
\end{ques}

In dimension $3$, Fukshansky and Robins employed L'huilier's to express the solid angle $\Omega$ as:

\small
\begin{equation}\label{lhuilier}
\tan\left(\frac{\Omega}{4}\right)^2=\tan\left(\frac{\alpha+\beta+\gamma}{2}\right) \tan\left(\frac{\alpha+\beta-\gamma}{2}\right) \tan\left(\frac{\beta+\gamma-\alpha}{2}\right) \tan\left(\frac{\gamma+\alpha-\beta}{2}\right).
\end{equation}
\normalsize


\begin{figure}[!h]
\centering
\begin{tikzpicture}[scale=1]
\draw[->] (0,0) -- (3,0);
\draw[->,dashed] (0,0) -- (2.5,1.5);
\draw[->] (0,0) -- (1,3);

\coordinate (A3) at (1.4,0.85);
\coordinate (B3) at (1.15,0);
\coordinate (C3) at (0.45,1.4);

\fill[color=lightgray] (A3) to[bend left=20] (B3) to[bend right=17] (C3) to[bend left=22] (A3);

\node[right] (1) at (3,0) {$v_1$};
\node[above right] (2) at (2.5,1.5) {$v_2$};
\node[above] (3) at (1,3) {$v_3$};

\node (4) at (1.15,0.8) {\small$\Omega$};

\end{tikzpicture}
\caption{Solid angle in dimension 3.}
\label{fig:lhuilier}
\end{figure}
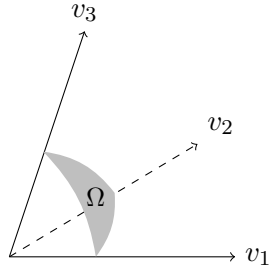

\noindent Here $\alpha,\beta$ and $\gamma$ are the pairwise angles between the three basis vectors $v_{1}, v_{2}, v_{3}$. As we will see later on, they satisfy $\frac{\pi}{3} \leq \alpha,\beta,\gamma \leq \frac{2\pi}{3}$ whenever the basis is minimal. With some other extra assumptions on $\alpha,\beta,\gamma$, Fukshansky and Robins proved that 

\medskip

\begin{thm}[Corollary 3.3 in \cite{FukRob}]\label{FRthm}
For a wide class of rank $3$ lattices including the well-rounded lattices, a minimal basis exists with $\Omega$ satisfying:
\[
\tan\left(\frac{\pi}{12}\right)^3 \le \tan\left(\frac{\Omega}{4}\right)^2 \le \tan\left(\frac{\pi}{8}\right)^2.
\]
The upper and lower bounds are sharp, and they attained when $\lattice$ is similar\footnote{Two lattices are similar if one can be obtained from the other by an orthogonal transformation followed by a scalar multiplication.} to respectively the rectangular lattice $\mathcal{R}_3$ and the alternating lattice $\A_3$. $\mathcal{R}_3$ is generated by $\{ (1,0,0), (0,1,0), (0,0,1) \}$. $\A_3$ is also called the rank $3$ face-centered cubic lattice, and is generated by $\{ (\frac{1}{\sqrt{2}},\frac{1}{\sqrt{2}},0), (\frac{1}{\sqrt{2}},0,\frac{1}{\sqrt{2}}), (0,\frac{1}{\sqrt{2}},\frac{1}{\sqrt{2}}) \}$.
\end{thm}

The above theorem covers the important case of \emph{well-rounded} (WR) lattices, i.e., those with a minimal basis consisiting of equal length vectors. These lattices are important in discrete optimization and also give good solutions to the \emph{kissing number problem} (see \cite{FukRob}, \cite{ConSlo}). Some technical condition however prevents the proof to apply to all rank $3$ lattices. Furthermore, an analogue of \eqref{lhuilier} is not known in dimensions higher than $3$. In this paper, we give a complete answer to Question~\ref{mainques} in dimensions $3$ and $4$:

\medskip

\begin{thm}\label{mainthm}
For any rank $3$ lattice, a minimal basis exists with solid angle $\Omega$ satisfying:
\[
\Omega_{\A_3} \le \Omega \le \Omega_{\mathcal{R}_3},
\]
where $\Omega_{\A_3} = 0.551285\ldots, \Omega_{\mathcal{R}_3} = \frac{\pi}{2}$ are absolute constants\footnote{These constants matches with those in Theorem~\ref{FRthm}.}. Similarly, for any rank $4$ lattice, a minimal basis exists with solid angle $\Omega$ satisfying:
\[
\Omega_{\A_4} \le \Omega \le \Omega_{\mathcal{R}_4},
\]
where $\Omega_{\A_4} = 0.193142\ldots, \Omega_{\mathcal{R}_4} = \frac{\pi^2}{8}$. 
These bounds are sharp. $\Omega_{\mathcal{R}_3}$ and $\Omega_{\mathcal{R}_4}$ are attained when $\lattice$ is similar to respectively $\rect_{3}$ and $\rect_{4}$; $\Omega_{\A_3}$ and $\Omega_{\A_4}$ are attained when $\lattice$ is similar to the alternating lattices $\A_{3}$ and $\A_{4}$.
\end{thm}

In the next section, we outline the differences between the methods of proof for Theorem~\ref{FRthm} (\cite{FukRob}) and our Theorem~\ref{mainthm}. We will give necessary and sufficient conditions to check that a basis is reduced, and also describe formula to compute $\Omega$ given the basis in matrix form. 

\medskip

\section{Conditions for a reduced basis and a formula for solid angles}\label{background}

We first mention the related concept of \emph{successive minima} for a lattice. Given a full rank lattice $\Lambda \subset \R^n$, its successive minima 
\[
0 < \lambda_1 \le \dots \le \lambda_n
\]
are defined as:
\[
\lambda_i = \inf \{\lambda \in \R^{+} : \Lambda \cap \lambda \nball \text{ contains } i \text{ linearly independent vectors} \},
\]
where $\nball$ is the unit ball in $\R^n$. Associated to each $\lambda_i$ is a vector $u_i \in \Lambda$ with $\norm{u_{i}} = \lambda_i$. Even though we only require $u_1, \dots, u_n$ to be linearly independent, $\{u_1, \dots , u_{n}\}$ actually forms a minimal basis for $\Lambda$ when $n \le 4$. This is an important fact, whose proof can be found in standard texts on Geometry of Numbers (see \cite{Sie}, \cite{GruLek}). However, for $n \ge 5$, this does not always hold. Using this fact, Fukshansky and Robins treated a reduced basis in dimension $3$ as successive minima vectors. They showed that:

\begin{lem}[Lemma 2.3 in \cite{FukRob}]\label{anglecond}
Let $\Lambda \in \R^n$ be a full-rank lattice with successive minima $0 < \lambda_1 \le \dots \le \lambda_n$ and associated lattice vectors $u_{1}, \dots, u_{n}$, chosen so that they all lie in the same half-space. Then for every pair $u_{i}, u_{j}$ $(1 \le i < j \le n)$, the angle $\theta_{ij}$ between them satisfies
\[
\frac{\pi}{3} \le \theta_{ij} \le \frac{2\pi}{3}.
\]
\end{lem}

This implies that in dimension $3$, the angles $\alpha, \beta$ and $\gamma$ in L'huilier's formula \eqref{lhuilier} are in between $\frac{\pi}{3}$ and $\frac{2\pi}{3}$. This was a crucial ingredient for the proof of Theorem~\ref{FRthm}. Nevertheless, Lemma~\ref{anglecond} is not a sufficient condition for $\{u_{1}, \dots, u_{n}\}$ to be a reduced basis, since it totally leaves out the relations between the lengths and pairwise angles of the $n$ basis vectors. 
\medskip

To give necessary and sufficient conditions for a minimal basis, we refer back to the original definition given at the beginning of Section 1. The collection $\{v_{1}, \dots, v_{n}\}$ is a reduced basis for $\Lambda$ iff they can generate $\Lambda$ under integer linear combinations, and for any $n$-tuple $(x_{1}, \dots, x_{n}) \in \Z^{n}$ satisfying $\gcd(x_{k}, \dots, x_{n}) = 1$ for some $1 \le k \le n$, we have $\norm{v_{k}} \le \norm{\sum{x_{i} v_{i}}}$. This characterization at the outset requires an infinite number of inequalities but, but Minkowski proved a result saying that we only need to check a finite number of inequalities involving dot products between the basis vectors. This is most conveniently expressed in terms of the Gram matrix associated to the basis. Call $A$ the $n\times n$ matrix having $v_i$'s as columns, then the Gram matrix $Q = A^{t}A$ has entries $q_{ij}=q_{ji}=\langle v_i,v_j \rangle$. $Q$ is positive definite and $\det(Q)={\det(A)}^2$ is the squared volume of the fundamental parallelepiped with edges $v_1,\dots,v_n$. The \textit{Minkowski reduction conditions} are linear inequalities in the $q_{ij}$'s, satisfying which $Q$ would be called \textit{reduced}.
\medskip

\begin{thm}[Minkowski, see \cite{Sie}, \cite{GruLek}]\label{Minthm}
Given $n$ linearly independent vectors $v_{1}, \dots, v_{n}$ in $\R^{n}$. Let $\Lambda$ be the lattice they generate and $Q$ be the Gram matrix with $q_{ij} = \langle v_{i}, v_{j} \rangle$. Then $\{v_{1}, \dots, v_{n}\}$ is a reduced basis for $\Lambda$ iff the entries $q_{ij}$'s satisfy a fixed set of linear inequalities, which only depend on the dimension $n$. 
\end{thm}

Abusing the language, we call any symmetric matrix $Q$ satisfying such inequalities a \emph{reduced form}. Reduction in $\R^2$ is particularly simple and was known to Gauss. In this case, $Q = \left(\begin{smallmatrix} a&b \\ b&c \end{smallmatrix}\right)$ is reduced exactly when 
\[
a\leq c \quad \text{and} \quad 2|b| \leq a.
\]
These correspond to the inequalities
\begin{equation}\label{2dimcond}
\| v_1 \| \leq \| v_2 \| \quad \text{and} \quad 2|\langle v_1,v_2 \rangle| \leq {\|v_1\|}^2.
\end{equation}
A more geometric way to look at the second inequality is 
\[
\norm{v_2} \leq \|v_1 - v_2\| \quad \text{and} \quad \norm{v_{2}} \le \|v_1 + v_2\|.
\]
Together with $\norm{v_1} \le \norm{v_{2}}$, these are exactly the finite collection of inequalities that Minkowski's theorem refers to. An important corollary, which the reader can verify, is that \eqref{2dimcond} implies 
\[
\frac{| \langle v_1,v_2 \rangle |}{\| v_1 \| \| v_2 \|} \leq \frac{1}{2},
\]
which means $v_1$ is separated from $v_1$ by an angle at least $\frac{\pi}{3}$ and at most $\frac{2\pi}{3}$. Thus, we can recover the necessary condition in Lemma~\ref{anglecond}.
\medskip

The reduction conditions get more involved as the dimension increases. The case $n=3$ requires 9 inequalities. Namely for $Q = \left( \begin{smallmatrix} a&d&e \\[0.22em] d&b&f \\[0em] e&f&c \end{smallmatrix} \right)$ to be reduced, we must have:
\begin{itemize}
\item[3a)] $a\leq b\leq c$.
\item[3b)] $2|d|\leq a$;  $2|e|\leq a$;  $2|f| \leq b$.
\item[3c)] $a+b+2(d+e+f) \geq 0$;  $a+b+2(d-e-f) \geq 0$;\\  $a+b+2(e-d-f) \geq 0$;  $a+b+2(f-d-e) \geq 0$.
\end{itemize}
For a proof of this and also the general theorem of Minkowski, please refer to \cite{Sie} and \cite{GruLek}. Reduction conditions for $n=4$ are even more involved. 
\medskip

Coming now to evaluating the solid angle, the following formula proved by Hajja and Walker \cite{HajWal} expresses the solid angle $\Omega$ in terms of $\det(Q)$ and the associated quadratic form $x^{t}Qx$. The formula is:
\begin{equation}\label{HWform}
\Omega_Q = \sqrt{\det(Q)} \int_{\sphere} {(x^{t}Qx)^{-n/2}}ds.
\end{equation}
Here $\sphere$ is the part of $\sphere^{n-1}$ lying in the positive orthant and $ds$ is the element of surface area on $\sphere^{n-1}$. The \emph{normalized} solid angle $\bar{\Omega}_{Q}$ is defined as:
\begin{equation*}\label{HWformnorm}
\bar{\Omega}_{Q} = \frac{\Omega_{Q}}{S_{n-1}},
\end{equation*}
where $S_{n-1}=\textnormal{area}(\sphere^{n-1})=\displaystyle\frac{n\pi^\frac{n}{2}}{\Gamma\left(\frac{n}{2}+1\right)}$. In low dimensions, $\Omega_Q$ is largely influenced by $\det(Q)$, whereas in higher dimensions the influence is weaker. This is explained by the phenomenon that most of the unit ball's volume concentrates near to its boundary in higher dimensions. This poses serious difficulties in optimizing \eqref{HWform} with $q_{ij}$'s as variables when $n$ is large, since we have to analyze the integral part more carefully. However, when $n \leq 4$, we can still manage to find the extrema for $\Omega_Q$ by first optimizing at $\det(Q)$. For our proof, we will fix the diagonal elements of $Q$ and then minimize $\det(Q)$, keeping the condition that $Q$ is reduced.
\medskip

The next section will carry out this minimization process for $\det(Q)$ in $\R^3$ and $\R^4$. Section~\ref{Secdim3} settles the bounds for $\Omega_Q$ for all rank 3 lattices. Section~\ref{Secdim4} deals with rank 4 lattices by the same method, but more work will be required. Finally in Section~\ref{Secdim5}, we give an example showing that the alternating lattice $\A_5$ no longer has the smallest solid angle in dimension 5 and discuss some open questions.

\medskip

\section{Minimizing the determinant}\label{Secdet}

A general method was described in the work of Barnes (\cite{Bar1}, \cite{Bar2}), which allows us to find the exact minimum of $\det(Q)$, with the conditions that $\diag(Q)$ is fixed and $Q$ must remain a reduced form. First, let us recall the definition of \emph{quasi-concavity}. A real function $f$ on $\R^k$ is quasi-concave if 
\[
f(\lambda x + (1-\lambda)y) \geq \text{min}(f(x),f(y)) \text{ for any } x,y \in \R^k \text{ and }0\leq\lambda\leq 1. 
\]
We first prove a simple but useful fact:

\medskip

\begin{lem}
The determinant function is quasi-concave on the restricted domain of symmetric positive definite matrices.
\end{lem}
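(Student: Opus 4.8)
The plan is to reduce the matrix statement to a one-variable inequality by simultaneously diagonalizing the two matrices under a congruence. Let $X$ and $Y$ be symmetric positive definite and fix $\lambda \in [0,1]$; the goal is $\det(\lambda X + (1-\lambda)Y) \geq \min(\det X, \det Y)$. I would first record that the set of symmetric positive definite matrices is a convex cone, so $\lambda X + (1-\lambda)Y$ stays in the domain and its determinant is positive; keeping all three determinants strictly positive is what lets the monotonicity and $\min$ arguments below go through, and it is the one bookkeeping point I would be careful about.

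Next I would invoke the standard linear-algebra fact that two symmetric positive definite matrices can be simultaneously diagonalized by congruence: there is an invertible $P$ with $P^{t}XP = I$ and $P^{t}YP = D = \mathrm{diag}(d_1,\dots,d_n)$, where the $d_i>0$ are the eigenvalues of $X^{-1}Y$. Setting $c = \det(P)^{-2}>0$, congruence scales every determinant by the same factor $c$, so that $\det X = c$, $\det Y = c\prod_i d_i$, and $\det(\lambda X + (1-\lambda)Y) = c\prod_i\bigl(\lambda + (1-\lambda)d_i\bigr)$. This turns the matrix inequality into the scalar inequality $\prod_i\bigl(\lambda + (1-\lambda)d_i\bigr) \geq \min\bigl(1,\ \prod_i d_i\bigr)$.

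To finish, I would set $f(\lambda) = \sum_i \log\bigl(\lambda + (1-\lambda)d_i\bigr)$ and observe that each summand is the logarithm of an affine function of $\lambda$, hence concave, so $f$ is concave on $[0,1]$. Concavity then gives $f(\lambda) \geq \lambda f(1) + (1-\lambda)f(0) \geq \min\bigl(f(0),f(1)\bigr)$, and since $f(1)=0$ and $f(0) = \log\prod_i d_i$, exponentiating and multiplying back by $c$ yields exactly $\det(\lambda X + (1-\lambda)Y) \geq \min(\det X, \det Y)$. There is no serious obstacle here: the argument is routine once simultaneous diagonalization is in hand, the only thing to watch being positivity as noted above. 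For completeness I would remark that an alternative route is the Minkowski determinant inequality $\det(X+Y)^{1/n} \geq \det(X)^{1/n} + \det(Y)^{1/n}$, which shows $\det^{1/n}$ is concave and hence quasi-concave; because $t\mapsto t^{n}$ is increasing on $[0,\infty)$ the superlevel sets of $\det$ coincide with those of $\det^{1/n}$, giving the same conclusion. I would favor the diagonalization argument precisely because the reduction to a single-variable concavity statement is transparent and self-contained.
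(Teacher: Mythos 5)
Your proof is correct and follows essentially the same route as the paper: your simultaneous diagonalization $P^{t}XP = I$, $P^{t}YP = D$ is the paper's factorization $Q = K^{t}(\lambda I + (1-\lambda)H)K$ in disguise, and your use of concavity of $\log$ on the affine functions $\lambda + (1-\lambda)d_i$ is exactly the paper's weighted AM--GM step $\lambda + (1-\lambda)h_i \geq h_i^{1-\lambda}$. The Minkowski-inequality remark at the end is a genuinely different (and shorter) alternative, but as presented it is only an aside.
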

\begin{proof}
It is equivalent to show that if $\det(Q_2) \geq \det(Q_0) \geq \alpha > 0$ then $Q = \lambda Q_0 + (1-\lambda)Q_2$ has $\det(Q) \geq \alpha$. We can write $Q_0 = O^{t}DO$, with $O$ an orthogonal matrix and $D$ a diagonal matrix with all positive diagonal entries. Let $E = \sqrt{D}$ and $K=EO$, we have $Q_0=K^{t}K$. Now 
\[
Q = \lambda Q_0 + (1-\lambda) Q_2 = K^{t}(\lambda I + (1-\lambda)K^{-t}{Q_2}K^{-1})K.
\] 
Let $H=K^{-t}{Q_2}K^{-1}$, we have:
\[
\det(Q) = \det(Q_0)\det(\lambda I + (1-\lambda)H) \geq \alpha \det(\lambda I + (1-\lambda)H).
\]

Note that $H$ is also symmetric and $\det(H)=\frac{\det(Q_2)}{\det(Q_0)} \geq 1$. Therefore $\lambda I + (1-\lambda)H$ is diagonalizable and 
\[
\det(\lambda I + (1-\lambda)H) = \prod (\lambda + (1-\lambda)h_i)
\]
with $h_i$ being the eigenvalues of $H$. Using AM-GM inequality, we have 
\[
\lambda + (1-\lambda)h_i \geq {h_i}^{1-\lambda}
\]
Hence $\det(Q) \geq \alpha (\prod{h_i})^{1-\lambda} = \alpha(\det(H))^{1-\lambda} \geq \alpha$.

\end{proof}

Another way to look at quasi-concavity of $f$ is that $R_{\alpha} = \{x \in \R^k : f(x) \geq \alpha\}$ is always a convex set for any $\alpha \in \R$. Minkowski's theorem says that a reduced Gram matrix $Q$ must satisfy certain linear inequalities that depend on the dimension $n$. These inequalities correspond to certain half-spaces in the space of all symmetric $n\times n$ matrices, and their intersection is a polyhedral cone. We call this cone $\M_n$. If we fix $\diag(Q)$, then we are restricted to the intersection of $\M_n$ with $n$ hyperplanes, and intersection becomes a convex polytope $\M_n^\diag$. By quasi-concavity of the determinant function, we know that the minima for $\det(Q)$ are located among the polytope's vertices. These vertices can be found explicitly by taking all possible intersections of any $\frac{n(n-1)}{2}$ different facets of $\M_{n}^{\diag}$, and check whether they actually belong to $\M_n^{\diag}$. For an easy illustration, when $n=2$, $Q = \left(\begin{smallmatrix} a&b \\ b&c \end{smallmatrix}\right)$, the hyperplanes defining $\M_2$ are 
\[
a\leq c, -2b\leq a \; \text{ and } \; 2b\leq a.
\]
Fixing $a$ and $c$, we see that the polytope $\M_{n}^{\diag}$ is just a line segment with two vertices $\{(a,-\frac{a}{2},c),(a,\frac{a}{2},c)\}$ and the minimum determinant is $\left(ac - \frac{a^2}{4}\right)$. It was further shown in \cite{Bar1} and \cite{Bar2} that:

\medskip

\begin{theo}[\cite{Bar1}, \cite{Bar2}]\label{Barlemma}
Fixing $\diag(Q)$ of the reduced Gram matrix $Q$, we have:
\begin{itemize} 
\item[\textnormal{a)}] If $n=3$ and $\diag(Q) = [a,b,c]$, then $\det(Q)\geq \frac{abc}{2}+\frac{ab(c-b)}{4}+\frac{ac(b-a)}{4}$, with the minimum achieved at three different forms.
\item[\textnormal{b)}] If $n=4$ and $\diag(Q) = [a,b,c,d]$, then $\det(Q)\geq \frac{abcd}{4}+\frac{acd(b-a)}{4}+\frac{abd(c-b)}{4}+\frac{abc(d-c)}{4}+\frac{a^2 (b-c)^2}{16}$ with the minimum achieved at fourteen different forms.
\end{itemize}
\end{theo}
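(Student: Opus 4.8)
The plan is to turn the Lemma into a reduction to a finite check. Quasi-concavity says that along any segment the value of $\det$ is at least the smaller of its two endpoint values, so writing an arbitrary point of a convex polytope as a convex combination of the polytope's vertices and iterating gives $\det(Q)\ge\min_{\text{vertices}}\det$. Hence the minimum of $\det$ over the polytope $P$ obtained by intersecting $\M_n$ with the $n$ hyperplanes that freeze the diagonal is attained at a vertex of $P$. The decisive gain is that there are no interior stationary points to rule out: the whole problem collapses to locating the vertices of $P$ and comparing the finitely many determinant values recorded there, retaining only those that are genuinely positive definite.

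To find those vertices I would count degrees of freedom. Freezing the $n$ diagonal entries leaves the $\frac{n(n-1)}{2}$ off-diagonal entries free, so $P$ lives in a space of dimension $\frac{n(n-1)}{2}$ and a vertex is cut out by making that many of the reduction facets simultaneously tight (and independent). Following the recipe already indicated, for each choice of $\frac{n(n-1)}{2}$ facets drawn from the Minkowski inequalities I would solve the resulting linear system for the off-diagonal entries, discard any solution violating a remaining inequality or failing positive definiteness, and expand $\det(Q)$ at the survivors. The claimed bound is then the least value obtained, and the extreme forms are the vertices realizing it.

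For $n=3$ this is quick. The unknowns are $d,e,f$ and the candidate facets are the box constraints $2|d|\le a,\ 2|e|\le a,\ 2|f|\le b$ together with the four half-spaces in 3c). The crux, even here, is that taking all box constraints tight so as to maximize $af^2+be^2+cd^2$ is feasible, but the further wish to make the cross term $2def$ negative forces a sign parity of $(d,e,f)$ that \emph{violates} 3c); the inequalities in 3c) are precisely what forbid that naive choice, so the minimizer instead sits at the box-tight vertices of the opposite parity. Using the freedom to flip signs of basis vectors and the fixed order $a\le b\le c$ to collapse cases, a direct expansion of the $3\times3$ determinant gives $\det(Q)=abc-\tfrac14(ab^2+a^2c)$, which coincides with the expression in part (a); the vertices on which it is attained are the three extreme forms.

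The real labor is $n=4$, and this is where I expect the main obstacle. Now $P$ is six-dimensional, the Minkowski list for $n=4$ contains many more facets than for $n=3$, and the number of ways to pick six tight, independent facets is large; keeping this search exhaustive without drowning in cases is the whole difficulty. I would again quotient by the basis sign-flips and the ordering of $a,b,c,d$, enumerate the resulting candidate vertices, check feasibility and positive definiteness of each, and compare determinants, expecting the minimum $\frac{abcd}{4}+\frac{acd(b-a)}{4}+\frac{abd(c-b)}{4}+\frac{abc(d-c)}{4}+\frac{a^2(b-c)^2}{16}$ to emerge together with the fourteen vertices that tie for it. Verifying that no other facet selection dips below this value---rather than computing any single determinant---is where essentially all the effort goes, and organizing the enumeration as in Barnes \cite{Bar} is the natural way to make it manageable.
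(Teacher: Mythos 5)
Your proposal follows essentially the same route as the paper: the paper invokes its quasi-concavity lemma to reduce the minimization to the vertices of the polytope obtained by fixing the diagonal of $Q$, and then defers the actual vertex enumeration and comparison entirely to Barnes \cite{Bar}, which is exactly the reduction-plus-finite-check you describe (and your $n=3$ expansion agrees with the stated bound). The only caveat is that even in the $n=3$ illustration a complete argument must also enumerate the vertices where the facets of type 3c) are tight, not just the box-tight ones, since some of the three extreme forms arise that way.
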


\noindent The method of proof as mentioned above is to find all vertices of $\M_{n}^{\diag}$ and compare the values of $\det(Q)$ at those points. The explicit three/fourteen forms with minimum determinant are given in \cite{Bar1}, \cite{Bar2}. From now on, we are using square brackets to list the diagonal and upper-diagonal elements of a symmetric matrix. For instance, $Q = \left( \begin{smallmatrix} a&d&e \\[0.22em] d&b&f \\[0em] e&f&c \end{smallmatrix} \right)$ is encoded as $Q=[a, d, e; \; b, f; \; c]$. We now prove two technical lemmas which will be used later in Section~\ref{Secdim3} and \ref{Secdim4}. The reader can skip them for the moment. Let us assume that $a_1,a_2,a_3,b_1,b_2,c_1$ in the next two lemmas are real numbers satisfying:

\begin{itemize}
\item[\textnormal{i)}] $0\leq a_1,a_2,a_3,b_1,b_2,c_1 \leq \frac{1}{2}$
\item[\textnormal{ii)}] $3+2(a_1+c_1)-2(a_2+b_2+a_3+b_1) \geq 0;$ \\$3+2(a_2+b_2)-2(a_1+c_1+a_3+b_1) \geq 0;$ \\ $ 3+2(a_3+b_1)-2(a_1+c_1+a_2+b_2) \geq 0$.
\end{itemize}

\begin{lem}\label{tech1}
Fixing $c_1$, the determinant of $Q=[1,a_1,a_2,a_3; \; 1,b_1,b_2; \; 1,c_1; \; 1]$ is minimized when $a_1 = \frac{1}{2}-c_1$ and $a_2=a_3=b_1=b_2=\frac{1}{2}$.
\end{lem}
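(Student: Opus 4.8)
The plan is to freeze $c_1$ and regard
\[
Q=\begin{pmatrix} 1 & a_1 & a_2 & a_3 \\ a_1 & 1 & b_1 & b_2 \\ a_2 & b_1 & 1 & c_1 \\ a_3 & b_2 & c_1 & 1 \end{pmatrix}
\]
as a function of the five free entries $(a_1,a_2,a_3,b_1,b_2)$ ranging over the set $P$ cut out by (i) and (ii). Since those constraints are linear, $P$ is a convex polytope, and the quasi-concavity of $\det$ on positive definite matrices proved above guarantees that $\det Q$ attains its minimum over $P$ at a vertex. A sharper fact I would record first is that $\det Q$ is \emph{concave} as a function of each single off-diagonal entry with the others held fixed: writing that entry as $q_{ij}=q_{ji}=t$, the coefficient of $t^2$ in $\det Q$ equals $-\det\!\left(Q[\{i,j\}^{c}]\right)$, the negative of the principal minor on the complementary index set, which is strictly positive by positive-definiteness. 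Thus along every axis-parallel segment inside $P$ the determinant is minimized at an endpoint, which both confirms that we may look at vertices and tells us that at the optimum each coordinate is pushed against a binding constraint.

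Next I would verify that the claimed point $P^{*}$ given by $a_1=\tfrac12-c_1$ and $a_2=a_3=b_1=b_2=\tfrac12$ is a genuine feasible vertex. Substituting, (i) holds because $0\le c_1\le\tfrac12$, the first inequality of (ii) becomes an equality, and the other two hold with slack $2$. The active constraints at $P^{*}$ are the four box equalities together with the tight first inequality of (ii); their gradients are $e_2,e_3,e_4,e_5$ and $(2,-2,-2,-2,-2)$ in the coordinates $(a_1,a_2,a_3,b_1,b_2)$, which are linearly independent (only the last has a nonzero $a_1$-component), so $P^{*}$ is indeed a vertex.

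The substance is to show $P^{*}$ beats every competitor, and rather than enumerate all vertices I would run an exchange argument driven by the coordinate concavity. The structural observation is that raising $a_2,a_3,b_1,b_2$ simultaneously by a common amount leaves the second and third inequalities of (ii) unchanged (their $+2$ and $-2$ coefficients cancel) and only decreases the slack of the first, while the point with all four equal to $\tfrac12$ is feasible for every admissible $a_1\in[\tfrac12-c_1,\tfrac12]$. I would show this raising motion does not increase $\det Q$ (using concavity in each coordinate together with the sign of the relevant cofactor $C_{13}$), reducing to the slice where $a_2=a_3=b_1=b_2=\tfrac12$. On that slice $\det Q$ is concave in the single variable $a_1$ over $[\tfrac12-c_1,\tfrac12]$, so its minimum is at one of the two endpoints; a direct comparison of the two endpoint values (equivalently, the sign of $C_{12}$) shows the lower endpoint $a_1=\tfrac12-c_1$ wins, which is exactly the claim.

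The main obstacle is precisely this reduction: the three inequalities in (ii) couple the five variables, so one cannot independently drive each off-diagonal entry to its box bound without risking infeasibility, and one must exhibit a single feasible motion along which the determinant provably does not increase. Pinning down the correct signs of the cofactors $C_{13}$ (for the raising step) and $C_{12}$ (for the final endpoint comparison) at the relevant configurations, and checking that the combined ``raise $a_2,a_3,b_1,b_2$ to $\tfrac12$, then drop $a_1$ to $\tfrac12-c_1$'' path stays inside $P$, is the delicate part; everything else is the routine substitution and linear-independence bookkeeping indicated above.
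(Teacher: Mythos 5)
Your scaffolding is fine and genuinely different from the paper's route: the paper proves this lemma by brute force, enumerating the vertices of the five\mbox{-}dimensional polytope up to the $(a_2,b_2)\leftrightarrow(a_3,b_1)$ symmetry and comparing the listed determinant values directly, whereas you try to replace the enumeration by an exchange argument. The parts of your plan that work are the reduction to vertices via quasi-concavity, the coordinate-wise concavity of $\det Q$ (the coefficient of $q_{ij}^2$ is indeed $-\det Q[\{i,j\}^c]$, here $-(1-q_{kl}^2)<0$), the verification that the claimed point is a feasible vertex, and the final one-variable step on the slice $a_2=a_3=b_1=b_2=\tfrac12$, where $\det Q$ is concave in $a_1$ and the endpoint comparison $\bigl(\tfrac12+\tfrac{c_1}{2}-c_1^2\bigr)^2\le \tfrac14+\tfrac{c_1}{2}-\tfrac34 c_1^2$ does go the right way. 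But the load-bearing step --- that raising $a_2,a_3,b_1,b_2$ toward $\tfrac12$ does not increase $\det Q$ --- is false. Take $c_1=0$, $a_1=b_1=b_2=\tfrac12$, $a_2=a_3=t$: this is feasible for all $t\in[0,\tfrac12]$, and a Schur-complement computation gives $\det Q=\tfrac14+t-2t^2$, which \emph{increases} from $\tfrac14$ at $t=0$ to $\tfrac38$ at $t=\tfrac14$ before returning to $\tfrac14$ at $t=\tfrac12$. So from the feasible point $(a_1,a_2,a_3,b_1,b_2)=(\tfrac12,0,0,\tfrac12,\tfrac12)$ the only available raising motion strictly increases the determinant, and the cofactors do not have the sign your argument needs. (Relatedly, a raise ``by a common amount'' only lands on the slice $a_2=a_3=b_1=b_2=\tfrac12$ if the four entries start out equal.)

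The natural fallback --- ``the restriction to the segment is concave, hence minimized at an endpoint, then compare endpoints'' --- does not repair the step. Along a direction that moves all four of $a_2,a_3,b_1,b_2$ the restriction of $\det Q$ is a quartic (the expansion contains the monomial $-2a_2a_3b_1b_2$), so concavity is not automatic; and even granting an endpoint minimum you would still have to prove that the far endpoint is the smaller one, which is essentially the whole content of the lemma and cannot be read off a local cofactor sign, as the example shows. Note also that at $c_1=0$ the minimizer is not unique (the vertex $\{\tfrac12,\tfrac12,\tfrac12,0,0\}$ ties the claimed one at determinant $\tfrac14$), so no argument yielding a strict decrease toward $P^{*}$ can be correct there. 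Unless you can exhibit a different feasible path along which the determinant provably does not increase, you are pushed back to the paper's method: list the vertices and compare their determinants, which for $c_1\in(0,\tfrac12)$ singles out $\{\tfrac12-c_1,\tfrac12,\tfrac12,\tfrac12,\tfrac12\}$ with value $\bigl(\tfrac12+\tfrac{c_1}{2}-c_1^2\bigr)^2$.
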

\begin{proof}
Fixing $c_1$ along with conditions i) and ii) means that $\M_{4}^{\diag}$ is a 5-dimensional convex polytope. Here we find all quintuples $(a_1,a_2,a_3,b_1,b_2)$ that correspond to the vertices of $\M_{4}^{\diag}$. Some of these however are equivalent because of the symmetry between $(a_2,b_2)$ and $(a_3,b_1)$, and therefore will yield the same value for $\det(Q)$. Below we list one vertex for each different $\det(Q)$ value, and the corresponding formula for $\det(Q)$:

\begin{multicols}{2}
\begin{itemize}[leftmargin=0.2in]
\item[]$(0,0,0,0,0): 1-{c_1}^2$
\item[]$(0,\frac{1}{2},\frac{1}{2},0,0): \frac{1}{2}+\frac{c_1}{2}-{c_1}^2$
\item[]$(0,0,\frac{1}{2},\frac{1}{2},0): \frac{9}{16}-{c_1}^2$
\item[]$(\frac{1}{2},0,0,0,0): \frac{3}{4}-\frac{3}{4}{c_1}^2$
\item[]$(\frac{1}{2},\frac{1}{2},\frac{1}{2},0,0): \frac{1}{4}+\frac{c_1}{2}-\frac{3}{4}{c_1}^2$
\item[]$(\frac{1}{2},0,\frac{1}{2},\frac{1}{2},\frac{1}{2}): \frac{5}{16}+\frac{c_1}{4}-\frac{3}{4}{c_1}^2$
\item[]$(\frac{1}{2},\frac{1}{2},0,0,\frac{1}{2}-c_1): \frac{5}{16}+\frac{c_1}{2}-{c_1}^2$
\item[]$(0,\frac{1}{2},0,0,0): \frac{3}{4}-{c_1}^2$
\item[]$(0,0,\frac{1}{2},0,\frac{1}{2}): \frac{1}{2}-{c_1}^2$
\item[]$(0,0,\frac{1}{2},\frac{1}{2},\frac{1}{2}): \frac{5}{16}+\frac{c_1}{2}-{c_1}^2$
\item[]$(\frac{1}{2},\frac{1}{2},0,0,0): \frac{1}{2}-\frac{3}{4}{c_1}^2$
\item[]$(\frac{1}{2},0,\frac{1}{2},0,\frac{1}{2}): \frac{1}{2} - \frac{3}{4}{c_1}^2$
\item[]$(\frac{1}{2},\frac{1}{2},\frac{1}{2},\frac{1}{2},\frac{1}{2}): \frac{1}{4}+\frac{c_1}{2}-\frac{3}{4}{c_1}^2$
\item[]$(\frac{1}{2},\frac{1}{2},0,c_1,\frac{1}{2}): \frac{5}{16}+\frac{c_1}{4}-\frac{3}{4}{c_1}^2$
\item[]$(0,c_1,\frac{1}{2},\frac{1}{2},\frac{1}{2}): \frac{5}{16}+\frac{c_1}{4}-\frac{3}{4}{c_1}^2$
\item[]$(\frac{1}{2}-c_1,\frac{1}{2},\frac{1}{2},\frac{1}{2},\frac{1}{2}): \left(\frac{1}{2}+\frac{c_1}{2}-{c_1}^2\right)^2$
\item[]$(\frac{1}{2}-c_1,\frac{1}{2},0,0,\frac{1}{2}): \frac{5}{16}+\frac{3}{4}c_1-\frac{5}{4}{c_1}^2-{c_1}^3+{c_1}^4.$
\end{itemize}
\end{multicols}

\medskip

\noindent It is tedious but straightforward to verify that the vertex $(\frac{1}{2}-c_1,\frac{1}{2},\frac{1}{2},\frac{1}{2},\frac{1}{2})$ has smallest determinant for all $c_1 \in (0,\frac{1}{2})$, and therefore the corresponding form $Q=[1,\frac{1}{2}-c_1,\frac{1}{2},\frac{1}{2};\; 1,\frac{1}{2},\frac{1}{2};\; 1,c_1;\; 1]$.
\end{proof}

\medskip

\begin{lem}\label{tech2}
~
\begin{itemize}
\item[\textnormal{a)}] Fixing $c_1 \leq \frac{1}{4}$, the determinant of $Q=[1,\frac{1}{2},a_2,a_3;\; 1,b_1,b_2;\; 1,c_1;\; 1]$ is minimized when $a_2=a_3=b_1=b_2=\frac{1}{2}$.
\item[\textnormal{b)}] If $c_1 > \frac{1}{4}$, we have
\[
\det([1,\frac{1}{2},a_2,a_3;\; 1,b_1,b_2;\; 1,c_1;\; 1]) > \det([1,\frac{1}{2},a_2,a_3;\; 1,b_1,b_2;\; 1,\frac{1}{2};\; 1]).
\]
\end{itemize}
\end{lem}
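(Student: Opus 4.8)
The plan is to handle both parts through the polytope-vertex philosophy of the two preceding lemmas, but to lean on the fact that once the other entries are frozen, $\det Q$ is only a \emph{quadratic} in $c_1$. This quadratic structure is what makes part b) clean, while part a) is essentially a facet restriction of the vertex enumeration already carried out.

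For part a) I would observe that imposing $a_1=\tfrac12$ simply intersects the convex polytope of the preceding lemma (the one cut out by conditions i) and ii) with $c_1$ held fixed) with the supporting hyperplane $a_1=\tfrac12$, which is one of its defining facets. Since the vertices of a face of a polytope are exactly the vertices of the polytope lying on that face, the candidate minimizers are precisely the entries of the preceding vertex list whose first coordinate equals $\tfrac12$. By the quasi-concavity of the determinant, the minimum over this facet is attained at one of them, so it remains to compare determinant values. Reading them off, the vertex $\{\tfrac12,\tfrac12,\tfrac12,\tfrac12,\tfrac12\}$ contributes $\tfrac14+\tfrac{c_1}{2}-\tfrac34 c_1^2$, and I would show by pairwise subtraction that this is no larger than every other listed value once $c_1\le\tfrac14$: for instance the differences with $\tfrac{5}{16}+\tfrac{c_1}{4}-\tfrac34 c_1^2$ and with $\tfrac{5}{16}+\tfrac{c_1}{2}-c_1^2$ reduce to $-\tfrac1{16}+\tfrac{c_1}{4}$ and $-\tfrac1{16}+\tfrac14 c_1^2$, both nonpositive for $c_1\le\tfrac14$, while the comparisons against $\tfrac34-\tfrac34 c_1^2$ and $\tfrac12-\tfrac34 c_1^2$ are immediate. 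This identifies $a_2=a_3=b_1=b_2=\tfrac12$ as a minimizer.

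For part b) the key observation is that, with $a_1=\tfrac12$ and $a_2,a_3,b_1,b_2$ all held fixed, $c_1$ occurs only in the off-diagonal positions $(3,4)$ and $(4,3)$, so $\det Q$ is a downward parabola
$$\det Q = P(c_1) = -\tfrac34\, c_1^2 + v\, c_1 + w,$$
whose leading coefficient $-\det\left(\begin{smallmatrix}1&\frac12\\ \frac12&1\end{smallmatrix}\right)=-\tfrac34$ is independent of the remaining entries. A cofactor computation at $c_1=0$ gives the linear coefficient
$$v = 2a_2a_3 + 2b_1b_2 - a_2b_2 - a_3b_1,$$
and the comparison requested in the statement then factors cleanly as
$$P(c_1)-P\!\left(\tfrac12\right) = \left(c_1-\tfrac12\right)\!\left(v-\tfrac34\!\left(c_1+\tfrac12\right)\right).$$
For $\tfrac14<c_1<\tfrac12$ the first factor is negative, so it suffices to force the second factor negative, i.e.\ $v<\tfrac34\!\left(c_1+\tfrac12\right)$.

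The only real work left is to bound $v$. Since $v$ is multilinear in $a_2,a_3,b_1,b_2$, its maximum over the box $[0,\tfrac12]^4$ is attained at a corner, and inspecting the corners yields $v\le\tfrac12$ (so condition ii) is not even needed here). Because $\tfrac34\!\left(c_1+\tfrac12\right)>\tfrac{9}{16}>\tfrac12\ge v$ for every $c_1>\tfrac14$, the second factor is negative and hence $P(c_1)>P(\tfrac12)$, with equality only at the endpoint $c_1=\tfrac12$ excluded by condition i). I expect the obstacle to be purely bookkeeping: in part a) the routine but slightly tedious pairwise comparison of the facet-vertex values, and in part b) the correct extraction of $v$, after which the inequality is forced by the fixed leading coefficient $-\tfrac34$ together with the crude bound $v\le\tfrac12$.
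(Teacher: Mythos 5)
Your proposal is correct and follows essentially the same route as the paper: part a) is the same vertex comparison (your facet observation merely recycles the enumeration from the preceding lemma, and the resulting candidate vertices and determinant values coincide exactly with the paper's list), and part b) rests on the identical factorization $(c_1-\tfrac12)\left(v-\tfrac34(c_1+\tfrac12)\right)$ with the same bound $v\le\tfrac12$, which you obtain by multilinearity at the corners of $[0,\tfrac12]^4$ where the paper instead writes $v=a_2a_3+b_1b_2+(a_2-b_1)(a_3-b_2)$ and runs a short case analysis. The only quibble is the endpoint $c_1=\tfrac12$, which is not in fact excluded by the reduction conditions; there the two forms coincide and the stated strict inequality degenerates to an equality, a defect shared by the paper itself, whose proof likewise only concludes that the difference is $\ge 0$.
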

\begin{proof}
a) Similar to the previous lemma, we look at the vertices of the polytope containing all quadruples $( a_2,a_3,b_1,b_2)$. Now since $a_1 = \frac{1}{2}$, the first inequality in condition ii) holds automatically and the remaining conditions are 
\[
0 \leq a_2,a_3,b_1,b_2 \leq \frac{1}{2}
\]
\[
1-c_1+(a_3+b_1)-(a_2+b_2) \geq 0 \; \text{ and } \;  1-c_1+(a_2+b_2)-(a_3+b_1)\geq 0.
\]
Below we list one vertex for each equivalence class and the corresponding determinant:
\begin{multicols}{2}
\begin{itemize}[leftmargin=0.4in]
\item[]$( 0,0,0,0 ): \frac{3}{4} - \frac{3}{4}{c_1}^2$
\item[]$( \frac{1}{2},0,0,0 ): \frac{1}{2}-\frac{3}{4}{c_1}^2$
\item[]$( \frac{1}{2},\frac{1}{2},0,0 ): \frac{1}{4}+\frac{c_1}{2}-\frac{3}{4}{c_1}^2$
\item[]$ ( 0,\frac{1}{2},0,\frac{1}{2} ): \frac{1}{2}-\frac{3}{4}{c_1}^2$
\item[]$( 0,\frac{1}{2},\frac{1}{2},\frac{1}{2} ): \frac{5}{16}+\frac{c_1}{4}-\frac{3}{4}{c_1}^2$
\item[]$( \frac{1}{2},\frac{1}{2},\frac{1}{2},\frac{1}{2} ): \frac{1}{4}+\frac{c_1}{2}-\frac{3{x_1}^2}{4}$
\item[]$( \frac{1}{2},0,c_1,\frac{1}{2} ): \frac{5}{16}+\frac{c_1}{4}-\frac{3}{4}{c_1}^2$
\item[]$(\frac{1}{2},0,0,\frac{1}{2}-c_1 ): \frac{5}{16}+\frac{c_1}{2}-{c_1}^2$
\end{itemize}
\end{multicols}

\noindent By direct comparison for $c_1 \in [0,\frac{1}{4}]$, we see that $Q = [1,\frac{1}{2},\frac{1}{2},\frac{1}{2};1,\frac{1}{2},\frac{1}{2};1,c_1;1]$ has the smallest determinant.
\\

\noindent b) We have:
\begin{align*}
&\det([1,\tfrac{1}{2},a_2,a_3;\; 1,b_1,b_2;\; 1,c_1;\; 1]) - \det([1,\tfrac{1}{2},a_2,a_3;\; 1,b_1,b_2;\; 1,\tfrac{1}{2};\; 1])\\[0.4em]
&= (c_1-\tfrac{1}{2})(2 a_2 a_3 + 2 b_1 b_2 - a_3 b_1 - a_2 b_2 - \tfrac{3}{4}(c_1+\tfrac{1}{2})).
\end{align*}
Here we have $c_1-\frac{1}{2} \leq 0$ and also:
\[
2 a_2 a_3 + 2 b_1 b_2 - a_3 b_1 - a_2 b_2 = a_2 a_3 + b_1 b_2 + (a_2 - b_1)(a_3 - b_2).
\]
If $(a_2 - b_1)(a_3 - b_2) < 0$, then $a_2 a_3 + b_1 b_2 + (a_2 - b_1)(a_3 - b_2) < a_2 a_3 + b_1 b_2 \leq \frac{1}{2}$. Otherwise, we can assume that $a_2 \geq b_1$ and $a_3 \geq b_2$, and we have: 
\begin{align*}
a_2 a_3 + b_1 b_2 + (a_2 - b_1)(a_3 - b_2) &\leq \tfrac{1}{4} + b_1 b_2 + (\tfrac{1}{2}-b_1)(\tfrac{1}{2}-b_2) \\[0.4em]
&= \tfrac{1}{2}+\tfrac{1}{2}(4 b_1 b_2 - b_1 - b_2) \\[0.4em]
&\leq \tfrac{1}{2} + \tfrac{1}{2}(4\tfrac{1}{2} \min\{ b_1,b_2 \} - b_1 - b_2) \leq \tfrac{1}{2}.
\end{align*} 
In any case, we have 
\[
2 a_2 a_3 + 2 b_1 b_2 - a_3 b_1 - a_2 b_2 - \frac{3}{4}(c_1+\frac{1}{2}) \leq \frac{1}{2} - \frac{3}{4}(\frac{1}{4}+\frac{1}{2}) < 0.
\]
So the conclusion is $(c_1-\frac{1}{2})(2 a_2 a_3 + 2 b_1 b_2 - a_3 b_1 - a_2 b_2 - \frac{3}{4}(c_1+\frac{1}{2})) \geq 0$.
\end{proof}

\medskip

\section{The 3-dimensional case}\label{Secdim3}

In this section we establish the sharp bounds in Theorem~\ref{mainthm} for rank 3 lattices. Let us consider the formula
\begin{equation}\label{normsolidform}
\bar{\Omega}_Q = \frac{\Omega_{Q}}{S_{n-1}} = \frac{\sqrt{\det(Q)}}{S_{n-1}} \int_{\sphere} {(x^{t}Qx)^{-n/2}}ds.
\end{equation}
A notable feature of the above formula is that if we replace $x_1$ by $\alpha x_1$ in $x = (x_{1}, \dots, x_{n})$, then the value of $\int_{\sphere} {(x^{t}Qx)^{-n/2}}ds$ is scaled by a factor $\frac{1}{\alpha}$. Doing so is tantamount to scaling the first basis vector $v_1$ by $\alpha$, which also scales $\sqrt{\det(Q)}$ by $\alpha$. Since $\Omega_{Q}$ remains the same even if we scale one of the basis vectors, the value of the integral must be scaled by $\frac{1}{\alpha}$. We first prove a minor result.

\begin{prop}\label{minor}
If $Q$ has all positive entries then $\bar{\Omega}_Q \leq \displaystyle\frac{1}{2^n}$.
\end{prop}
\begin{proof}
Call $q_{11},q_{22},\dots,q_{nn}$ the diagonal entries of $Q$ then by Hadamard's inequality for positive definite matrix, we have $\det(Q) \leq \prod q_{ii}$. Also because of the assumption on positivity of all entries, we have $x^{t}Qx \geq \sum {q_{ii}{x_i}^2}$. Hence
\begin{align*}
\bar{\Omega}_Q&\leq \frac{\sqrt{\prod q_{ii}}}{S_{n-1}} \int_{\sphere}{(\sum {q_{ii}{x_i}^2})^{-n/2}}ds \\
&= \frac{1}{S_{n-1}} \int_{\sphere}{(\sum {x_i}^2)^{-n/2}}ds = \frac{1}{2^n}
\end{align*}
\end{proof}

\begin{theo}\label{A3min}
Any reduced basis of any rank $3$ lattice has $\Omega_Q\geq \Omega_{\A_3}$ with $\A_3$ the rank $3$ face-centered cubic lattice generated by $(\frac{1}{\sqrt{2}},\frac{1}{\sqrt{2}},0), (\frac{1}{\sqrt{2}},0,\frac{1}{\sqrt{2}}), (0,\frac{1}{\sqrt{2}},\frac{1}{\sqrt{2}})$.
\end{theo}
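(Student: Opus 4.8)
The plan is to exploit the scaling invariance of the solid angle to pass to unit basis vectors, and then to bound the two factors $\sqrt{\det Q}$ and $I(Q)=\int_{\sphere}(x^{t}Qx)^{-3/2}\,\mathrm{d}s$ \emph{separately}, showing that both are simultaneously smallest at $\A_3$. The non-obvious point is that one need not control $\omega_Q$ jointly through $\det Q$ at all: the two factors decouple, and each is minimized at the all-$\tfrac12$ configuration.

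First I would normalize. Since the measure of the solid angle is unchanged when any basis vector is rescaled, replacing each $v_i$ by $v_i/\|v_i\|$ gives $\omega_Q=\omega_{\hat Q}$, where $\hat Q$ has unit diagonal and off-diagonal entries $\hat q_{ij}=q_{ij}/\sqrt{q_{ii}q_{jj}}$. Using $a\le b\le c$ together with the reduction bounds $2|d|\le a$, $2|e|\le a$, $2|f|\le b$ of 3b), each off-diagonal obeys $\hat q_{ij}\le\tfrac12\sqrt{q_{ii}/q_{jj}}\le\tfrac12$. Thus $\hat Q$ sits entrywise (off the diagonal) below the ``ceiling'' form $Q_{\A_3}=[1,\tfrac12,\tfrac12;1,\tfrac12;1]$, which is exactly the normalized Gram matrix of $\A_3$.

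Next comes the monotonicity of the integral. For $x$ in the positive orthant $\sphere$ every product $x_ix_j\ge 0$, so $x^{t}Q_{\A_3}x-x^{t}\hat Qx=2\sum_{i<j}(\tfrac12-\hat q_{ij})x_ix_j\ge 0$; both forms being positive on $\sphere$, we get $(x^{t}Q_{\A_3}x)^{-3/2}\le(x^{t}\hat Qx)^{-3/2}$ pointwise, and integrating yields $I(Q_{\A_3})\le I(\hat Q)$. For the determinant factor I would invoke the $n=3$ bound of the cited determinant theorem, which simplifies to $\det Q\ge abc-\tfrac{ab^2}{4}-\tfrac{a^2c}{4}$; hence $\det\hat Q=\det Q/(abc)\ge 1-\tfrac{b}{4c}-\tfrac{a}{4b}\ge\tfrac12$, the final step again using $a\le b\le c$. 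Since $\det Q_{\A_3}=\tfrac12$, multiplying the two inequalities gives $\omega_{\hat Q}=\tfrac{1}{A_2}\sqrt{\det\hat Q}\,I(\hat Q)\ge\tfrac{1}{A_2}\sqrt{\tfrac12}\,I(Q_{\A_3})=\omega_{\A_3}$, which is the claim.

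I expect the genuine subtlety to lie not in any one computation but in recognizing this clean separation: $\omega_Q$ is \emph{not} monotone in $\det Q$ alone, and an attempt to push the off-diagonals up (shrinking $I$) while tracking $\det$ jointly runs into the fact that $\det$ is a downward parabola in each $q_{ij}$. The argument closes only because the integral is driven down by the orthant-domination estimate, while the determinant is driven down \emph{independently} by the explicit Barnes minimum, and both minima coincide at $\A_3$. The ordering $a\le b\le c$ is used twice and is essential — it is precisely what forces the normalized determinant past the threshold $\tfrac12$. Equality throughout forces all $\hat q_{ij}=\tfrac12$ and $a=b=c$, pinning down $\A_3$ uniquely, and it is exactly this decoupling that one should expect to fail in dimension $5$, in accordance with the counterexample promised in Section 5.
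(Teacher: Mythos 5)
Your proposal is correct and is essentially the paper's own argument: the paper likewise decouples the two factors, using the Barnes bound in the form $\det Q\geq\frac{abc}{2}$ (identical to your $\det\hat Q\geq\frac12$ after dividing by $abc$) and rescaling the variables so that the quadratic form acquires a constant diagonal with cross-coefficients at most half of it, whence it is pointwise dominated by the $\A_3$ form on the positive orthant and the integral is bounded below. Your normalization to unit diagonal versus the paper's rescaling to diagonal $c$ is only a cosmetic difference.
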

\begin{proof}
Let $Q = [a,a_{1},a_{2};\; b,b_{1};\; c]$. Recall the conditions 3a-3c. By 3a and Lemma~\ref{tech1}a), we have $\sqrt{\det(Q)}\geq \sqrt{\frac{abc}{2}}$. Also, replacing $x_1$ with $\frac{\sqrt{c}}{\sqrt{a}}x_1$ and $x_2$ with $\frac{\sqrt{c}}{\sqrt{b}}x_2$, we get:
\begin{align*}
&\quad\;\int_{\sphere}{(x^{t}Qx)^{-\frac{3}{2}}}ds \\ &=\int_{\sphere}{(a{x_1}^2+b{x_2}^2+c{x_3}^2+2{a_1}{x_1}{x_2}+2{a_2}{x_1}{x_3}+2{b_1}{x_2}{x_3})^{-\frac{3}{2}}}ds \\
&=\sqrt{\frac{c^2}{ab}}\int_{\sphere}{(c({x_1}^2+{x_2}^2+{x_3}^2)+\tfrac{2{a_1}c}{\sqrt{ab}}{x_1}{x_2}+\tfrac{2{a_2}\sqrt{c}}{\sqrt{a}}{x_1}{x_3}+\tfrac{2{b_1}\sqrt{c}}{\sqrt{b}}{x_2}{x_3})^{-\frac{3}{2}}}ds
\end{align*}
From the reduction conditions 3a and 3b, we have $\max(|a_1|, |a_{2}|) \leq \frac{a}{2}, |b_{1}| \le \frac{b}{2}$ and $a\leq b\leq c$. These imply $\frac{2a_1c}{\sqrt{ab}}, \frac{2{a_2}\sqrt{c}}{\sqrt{a}}, \frac{2{b_1}\sqrt{c}}{\sqrt{b}} \leq c$. We have
$$\int_{\sphere}{(x^{t}Qx)^{-\frac{3}{2}}}ds \geq \frac{1}{\sqrt{abc}}\int_{\sphere}{({x_1}^2+{x_2}^2+{x_3}^2+{x_1}{x_2}+{x_1}{x_3}+{x_2}{x_3})^{-\frac{3}{2}}}ds$$
From these two bounds for $\sqrt{\det(Q)}$ and $\int_{\sphere}{(x^{t}Qx)^{-\frac{3}{2}}}ds$ we get
$$\Omega_Q \geq \frac{1}{\sqrt{2}} \int_{\sphere}{({x_1}^2+{x_2}^2+{x_3}^2+{x_1}{x_2}+{x_1}{x_3}+{x_2}{x_3})^{-\frac{3}{2}}}ds = \Omega_{\A_3}.$$
\end{proof}

\begin{cor}
Any rank $3$ lattice has a reduced basis with $ \Omega_{\A_3} \leq \Omega_Q \leq \frac{1}{8} S_{2} = \frac{\pi}{2}$. 
\end{cor}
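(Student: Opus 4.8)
The statement is the conjunction of a lower and an upper bound, and my plan is to establish each on one and the same reduced basis, which I will choose at the end. The lower bound $\omega_{\A_3}\le\omega_Q$ requires no new work: Theorem~3.2 gives it for \emph{every} reduced basis of a rank-3 lattice, so it will hold automatically for whatever basis I select to handle the upper bound. Thus the entire difficulty is to exhibit one reduced basis whose Gram matrix satisfies the hypothesis of Corollary~3.1 --- namely non-negative off-diagonal entries --- since for $n=3$ that corollary yields exactly $\omega_Q\le 1/2^3 = 1/8$.

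The tool I would use is the freedom to replace $v_i$ by $-v_i$. Such a flip leaves the lattice (and hence its set of reduced bases) unchanged while negating precisely the two off-diagonal entries of $Q$ in row/column $i$; writing $d=\langle v_1,v_2\rangle$, $e=\langle v_1,v_3\rangle$, $f=\langle v_2,v_3\rangle$, flipping $v_1$ negates $d,e$, flipping $v_2$ negates $d,f$, and flipping $v_3$ negates $e,f$. Crucially the reduction conditions are preserved: 3a) involves only the diagonal, 3b) is written with absolute values, and a direct check shows that each flip merely permutes the four inequalities in 3c) among themselves. Consequently every sign pattern reachable this way is again a reduced basis of the same lattice. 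When $def\ge 0$, one of these patterns makes $d,e,f$ all non-negative, and Corollary~3.1 closes the upper bound.

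The step I expect to be the real obstacle is the case $def<0$. Since every admissible flip changes two signs simultaneously, the sign of the product $def$ is flip-invariant, so here no choice of signs makes all three off-diagonal entries non-negative and Corollary~3.1 cannot be applied verbatim. That the existential phrasing ``has a reduced basis'' is genuinely needed can be seen from reduced forms such as $[1,\varepsilon,\varepsilon;1,-\tfrac12;1]$ with $\varepsilon$ small, whose single obtuse angle pushes $\omega_Q$ above $1/8$. My plan is to spend the available sign changes so that the off-diagonal entry of \emph{smallest} magnitude is the lone negative one (this is compatible with $def<0$ and forces the two larger entries to be positive), which is the configuration most favourable to the bound. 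The remaining task is to prove $\omega_Q\le 1/8$ for this configuration, and here I do not expect a pointwise argument to work: the inequality $x^tQx\ge\sum q_{ii}x_i^2$ underlying Corollary~3.1 fails on the part of the positive orthant near the coordinate plane transverse to the negative entry. Instead I would carry out an honest integral comparison on $\sphere$, showing that the small negative contribution concentrated near that coordinate plane is outweighed by the two positive (acute) contributions in the bulk, so that $\int_{\sphere}(x^tQx)^{-3/2}\,\mathrm{d}s\le\int_{\sphere}(\sum q_{ii}x_i^2)^{-3/2}\,\mathrm{d}s$; combined with Hadamard's inequality $\det Q\le\prod q_{ii}$ this gives $\omega_Q\le 1/8$, and with Theorem~3.2 the full chain $\omega_{\A_3}\le\omega_Q\le 1/8$ follows. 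Making this integral estimate quantitative and uniform over all reduced $Q$ with $def<0$ is the delicate point.
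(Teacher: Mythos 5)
You have correctly reduced the problem to the upper bound (the lower bound does follow for every reduced basis from Theorem~3.2, and sign flips do preserve the reduction conditions by permuting the inequalities in 3c)), but your argument is incomplete exactly where you say it is: when $def<0$ no flip makes all off-diagonal entries non-negative, and your plan for that case --- choose the smallest-magnitude entry to be the negative one and then prove $\int_{\sphere}(x^{t}Qx)^{-3/2}\,\mathrm{d}s\leq\int_{\sphere}(\sum q_{ii}x_i^2)^{-3/2}\,\mathrm{d}s$ by an integral comparison --- is only a sketch. You give no proof of that inequality, no argument that your particular sign choice is the one for which the bound holds, and no uniformity over the reduced domain; as written, the case $def<0$ is simply open. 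This is a genuine gap, not a routine verification.

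The paper closes the corollary without ever invoking Corollary~3.1, by a pigeonhole argument that works uniformly in both sign cases: the eight cones generated by $\epsilon_1 v_1,\epsilon_2 v_2,\epsilon_3 v_3$ with $\epsilon_i\in\{\pm 1\}$ tile $\R^3$ (every point decomposes uniquely in the basis, and the sign pattern of its coordinates selects its cone), so the eight normalized solid angles sum to $1$ and the smallest one is at most $\tfrac{1}{8}$. Since each sign choice is again a reduced basis of the same lattice, that choice witnesses the corollary, and Theorem~3.2 supplies the lower bound for it. You should replace your case analysis and the unproven integral estimate with this averaging argument; alternatively, if you want to salvage your route, note that the averaging argument is what actually certifies that \emph{some} sign pattern satisfies the upper bound, whereas identifying \emph{which} pattern (your ``smallest entry negative'' heuristic) is harder and unnecessary for the statement.
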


\begin{proof}
Pick a reduced basis and switch basis vectors to their negatives if necessary to ensure that $\bar{\Omega}_Q \leq \frac{1}{8}$ (the three basis vectors together with their negatives give us eight cones to choose from). By the above theorem, we also have the lower bound.  
\end{proof}

This proves the rank 3 case of Theorem~\ref{mainthm}. It should be noticed that the quadratic form $Q_{\A_3}=[1,\frac{1}{2},\frac{1}{2};\; 1,\frac{1}{2};\; 1]$ that minimizes $\Omega_{Q}$ lies on the boundary of $\M_3$. This fact also persists in higher dimensions:

\medskip

\begin{theo}\label{minonboundary}
If $Q \in \M_n$ has the smallest solid angle $\Omega_Q$, then $Q$ must lie on $\partial(\M_n)$, which is a union of facets of $\M_n$ corresponding to the reduction inequalities.
\end{theo}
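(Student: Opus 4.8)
The plan is to prove the effective form of the statement: no interior point of $\M_n$ can minimize $\omega_Q$. At an interior $Q$ every reduction inequality is strict, so $Q+tN$ stays in $\M_n$ for \emph{every} symmetric $N$ and all small $t>0$; it therefore suffices to exhibit one symmetric direction $N$ along which $\omega_Q$ strictly decreases. It is worth recording first the weak statement, that the minimum is \emph{attained} on $\partial(\M_n)$, since this is all the paper needs in practice and it follows immediately from scaling invariance: because $\omega_Q$ is unchanged when a single basis vector is rescaled, shrinking the longest vector $v_n$ (replacing $q_{nn}$ by $\alpha^2 q_{nn}$ and each $q_{jn}$ by $\alpha q_{jn}$ with $\alpha<1$) keeps the form reduced until the first facet is met, while the ordering inequality $q_{n-1,n-1}\le q_{nn}$ must eventually become tight since $\alpha^2q_{nn}\to 0$ and $q_{n-1,n-1}>0$. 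Thus the scaling orbit of any interior minimizer meets $\partial(\M_n)$ at some $\alpha^*\in(0,1)$ without changing $\omega_Q$. The same invariance shows that the components of a descent direction along the diagonal scaling directions $E_kQ+QE_k$ are irrelevant, so we may look for $N$ among the off-diagonal perturbations.

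For the strict statement I would differentiate $\omega_Q=c\sqrt{\det Q}\,I(Q)$ with $I(Q)=\int_{\sphere}(x^{t}Qx)^{-n/2}\,\mathrm{d}s$. For an off-diagonal entry one computes $\frac{\partial}{\partial q_{ij}}\log I=-\,n\,m_{ij}/I$, where $m_{ij}=\int_{\sphere}(x^{t}Qx)^{-n/2-1}x_ix_j\,\mathrm{d}s>0$, the positivity coming from $x_ix_j>0$ on the open positive orthant, together with $\frac{\partial}{\partial q_{ij}}\log\sqrt{\det Q}=(Q^{-1})_{ij}$. Hence $\frac{\partial}{\partial q_{ij}}\log\omega_Q=(Q^{-1})_{ij}-n\,m_{ij}/I$. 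The integral always pushes $\omega_Q$ down as an off-diagonal entry is increased, while the determinant pushes it in the direction of $(Q^{-1})_{ij}$, whose sign is not controlled in general. To neutralise the determinant I would take $N$ supported off the diagonal with $\sum_{i<j}(Q^{-1})_{ij}N_{ij}=0$, so that $\det Q$ is stationary to first order, and $\sum_{i<j}m_{ij}N_{ij}\neq0$; choosing $N$ to be the projection of the weight vector $(m_{ij})_{i<j}$ onto the hyperplane $\{\langle (Q^{-1})_{ij},\,\cdot\,\rangle=0\}$ produces such a direction whenever the vectors $((Q^{-1})_{ij})_{i<j}$ and $(m_{ij})_{i<j}$ are linearly independent. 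Along $\pm N$ one one-sided derivative of $\omega_Q$ is then strictly negative, contradicting minimality.

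The main obstacle is the degenerate case where $(m_{ij})$ and $((Q^{-1})_{ij})$ are parallel, say $m=\mu\,Q^{-1}$ off the diagonal. Then the off-diagonal gradient collapses to $(Q^{-1})_{ij}(1-n\mu/I)$; as long as $1-n\mu/I\neq0$ this is a nonzero multiple of the nonzero vector $((Q^{-1})_{ij})$, and moving against it still lowers $\omega_Q$. The genuinely delicate possibility is that the full gradient vanishes, i.e. $Q$ is an interior critical point. Here I expect to argue that no such point can be a local minimum by exploiting the opposing convexity types of the two factors: $\log I$ is convex in $Q$, since each integrand has $\log\big((x^{t}Qx)^{-n/2}\big)=-\tfrac n2\log(x^{t}Qx)$ convex and log-convexity is preserved under integration, whereas $\tfrac12\log\det Q$ is concave. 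A critical point of their sum should then be a saddle along a suitable direction transverse to the (flat) scaling orbit. Making this second-order comparison precise, and controlling the sign of $(Q^{-1})_{ij}$ — which is negative at the near-extremal, positive-off-diagonal, diagonally dominant reduced forms, so that there even a single off-diagonal increase lowers both factors — is where the real work lies; for $n\le 4$ one can in any case fall back on the explicit vertex lists of Section 2 and check the finitely many candidate minimizers directly.
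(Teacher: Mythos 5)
Your route is genuinely different from the paper's, and as written it has a real gap. The first-order part is fine: at an interior $Q$ whose off-diagonal gradient of $\log\omega_Q$ is nonzero you can descend inside $\M_n$ and contradict minimality. But the case of an interior critical point is not handled, and the proposed fix does not work as stated: $\log\omega_Q$ is (up to a constant) the sum of the concave function $\tfrac12\log\det Q$ and the convex function $\log\int_{\sphere}(x^{t}Qx)^{-n/2}\,\mathrm{d}s$, and a concave-plus-convex function can perfectly well have an interior local, even global, minimum, so ``a critical point should be a saddle'' is a hope, not an argument. The fallback of checking the vertex lists of Section 2 also does not apply: those are vertices of the determinant-minimization polytope, not critical points of $\omega_Q$, so an interior critical point of $\omega_Q$ need not appear among them. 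Separately, the ``weak statement'' obtained by shrinking $v_n$ only forces the ordering inequality $q_{n-1,n-1}\le q_{nn}$ to become tight, i.e.\ a first-type facet; what Section 4 actually uses from this theorem is that some \emph{second-type} condition (one involving an off-diagonal entry) is tight at the minimizer. For WR forms normalized to diagonal $1$ the first-type conditions are already equalities and the scaling orbit is trivial, so the weak statement buys nothing there, and the claim that it ``is all the paper needs in practice'' is not correct.

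The paper avoids all of this with a short geometric perturbation. If every second-type condition involving some $q_{1j}$ is strict, replace $v_1$ by $v_1'=\lambda v_1+\mu v_2$ with $\lambda,\mu>0$, of the same length as $v_1$ and making a slightly smaller angle with $v_2$. Then $q_{11}$ is unchanged, the perturbed $q_{1j}$ still satisfy the (strict) reduction inequalities, and the new cone is strictly contained in the old one, so the solid angle strictly drops --- no differentiation of $\omega_Q$, no determinant-versus-integral trade-off, and it yields exactly the stronger conclusion (for each $i$, some second-type condition involving a $q_{ij}$ is tight) that Section 4 relies on. To salvage your analytic route you would have to either rule out interior critical points of $\log\omega_Q$ or show that none of them is a local minimum; neither is done here.
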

\begin{proof}
With a quick reference to the explicit reduction conditions 3a)-3c) for $\M_3$ listed in Section~\ref{background}, the two inequalities in 3a) mean the basis vectors are arranged in increasing norms, we call these as \textit{first-type} reduction conditions. The other conditions in 3b) and 3c) are of \textit{second-type}. When $Q$ achieves the minimum for $\Omega_Q$, we can actually say something stronger. Namely, for any vector $v_i$, at least one of the second-type reduction conditions must attain equality, which involves some coefficient $q_{ij}$ with $i\neq j$. Consider $v_1$ for instance, if all the second-type reduction conditions containing some $q_{1j}$ are strict, we can change $v_1$ to ${v_1}'$ that lies within the 2-dimensional angle between $v_1$ and $v_2$. We can take ${v_1}'$ to have the same length with $v_1$, and the angle between ${v_1}'$ and $v_2$ slightly smaller than that between $v_1$ and $v_2$. This means $q_{11}$ is kept constant but $q_{1j}$ will be slightly changed, and all the reduction conditions still hold. Moreover, ${v_1}'$ is now a positive linear combination of $v_1$ and $v_2$, and therefore the cone $K'$ generated by $\{v_{1}', v_{2}, \dots, v_{n}\} $ is contained inside the original cone $K$ generated by $\{v_{1},\dots,v_{n}\}$. So $K'$ has a smaller solid angle measure compared to $K$. This would contradict the assumption on $\Omega_Q$'s minimality.

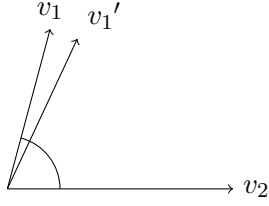
\begin{figure}[!h]
\centering
\begin{tikzpicture}[scale=1]
\draw[->] (0,0) -- (3,0) node[right] {$v_2$};
\draw[->] (0,0) -- (75:2.2) node[above] {$v_1$};
\draw[->] (0,0) -- (65:2.2) node[above right] {${v_1}'$};
\draw (0.7,0) arc (0:75:0.7);
\end{tikzpicture}
\caption{Slightly rotating $v_1$ will reduce $\Omega$.}
\label{fig:jiggle}
\end{figure}
\end{proof}

\medskip

\section{The 4-dimensional case}\label{Secdim4}

In this section we prove the rank 4 case of Theorem~\ref{mainthm}. Our proof strategy is to narrow down the search from the domain $\M_4$ to only forms with non-negative entries. From there, we will further narrow down to WR forms, which have all diagonal entries equal $1$. These steps will significantly simplify the complicated reduction conditions in $\R^4$. First, it is necessary to mention here the exact reduction conditions in $\R^4$, which were used by Barnes to prove Theorem~\ref{Barlemma}. It was shown in \cite{BarCoh} that a positive definite
\[
Q = \begin{pmatrix} q_{11} & q_{12} & q_{13} & q_{14} \\ q_{12} & q_{22} & q_{23} & q_{24} \\ q_{13} & q_{23} & q_{33} & q_{34} \\ q_{14} & q_{24} & q_{34} & q_{44} \end{pmatrix}
\]
is reduced when:

\medskip

\begin{itemize}
\item[4a)] $q_{11}\leq q_{22}\leq q_{33}\leq q_{44}$.
\item[4b)] For each $1 < i \le 4$, we must have $x^{t}Qx \geq q_{ii}$ for any $x=(x_1,x_2,x_3,x_4)$ satisfying $x_i=1$, $x_j=0$ if $j>i$, $x_j \in \{0,1,-1\}$ if $j<i$, and $x_j\neq 0$ for at least one $j<i$.
\end{itemize}

\medskip

The 36 second-type inequalities in 4b) consist of 28 inequalities which we already met in 3b)-3c). Those in fact tell us that the four rank 3 sublattices generated by $\{v_2,v_3,v_4\}$, $\{v_1,v_3,v_4\}$, $\{v_1,v_2,v_4\}$ and $\{v_1,v_2,v_3\}$ are also reduced. The other eight inequalities are added to compare $\|v_4\|$ with $\| \pm v_1 \pm v_2 \pm v_3 + v_4 \|$. Indexing the entries $q_{ij}$ row-by-column makes it easy to summarize all 39 reduction conditions, but from now on, we label the entries of $Q$ as: 
\[
Q = \begin{pmatrix} a & a_1 & a_2 & a_3 \\ a_{1} & b & b_1 & b_2 \\ a_{2} & b_{1} & c & c_1 \\ a_{3} & b_{2} & c_{1} & d \end{pmatrix}.
\]

We will prove that under these conditions
\[
Q_{\A_{4}}=[1,\frac{1}{2},\frac{1}{2},\frac{1}{2};\; 1,\frac{1}{2},\frac{1}{2};\; 1,\frac{1}{2};\; 1],
\]
the analogue of $Q_{\A_3}$, has the smallest solid angle $\Omega_{\A_4}$. Even though this is the case, $Q_{\A_4}$ no longer has the smallest determinant among all reduced WR forms. That property now belongs to 
\[
Q_0=[1,0,\frac{1}{2},\frac{1}{2};\; 1,\frac{1}{2},\frac{1}{2};\; 1,\frac{1}{2};\; 1].
\] 
Here the single $0$ can actually take any off-diagonal position. In fact, 
\[
\det(Q_0) = \frac{1}{4} < \det(Q_{\A_4}) = \frac{5}{16}.
\]
However, $Q_{\A_4}$ has the the largest possible values for off-diagonal entries and that helps minimize the integral $\int_{\sphere} {(x^{t}{Q_{\A_4}}x)^{-2}}ds$. At the end, we will compare $\Omega_{Q_{\A_4}}$ to	 $\Omega_{Q_0}$ numerically but it can be first proved that $\Omega_{Q_0}$ is smaller than a large class of solid angles.

\medskip

\begin{theo}\label{negisbad}
If $Q$ has any non-positive off-diagonal entry then $\Omega_Q \geq \Omega_{Q_0}$.
\end{theo}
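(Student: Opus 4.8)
The plan is to combine a pointwise lower bound on the integral $I(Q) := \int_{\sphere}(x^{t}Qx)^{-2}\mathrm{d}s$ with the determinant estimate of Theorem 2.2 b), after first normalizing the diagonal of $Q$ by a scaling that leaves $\omega_Q$ unchanged. The key point is that the hypothesis is used only through the sign of a single entry, while the rest of the argument applies to any reduced form.

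First I would invoke scale-invariance. Writing $a\leq b\leq c\leq d$ for the diagonal of $Q$ and $D=\mathrm{diag}(a^{-1/2},b^{-1/2},c^{-1/2},d^{-1/2})$, the matrix $\tilde Q=DQD$ has all diagonal entries $1$ and corresponds to rescaling each basis vector to unit length, so $\omega_{\tilde Q}=\omega_Q$. Comparing the two defining formulas and using $\det(DQD)=(abcd)^{-1}\det Q$ yields the identity $I(Q)=(abcd)^{-1/2}I(\tilde Q)$. The off-diagonal entries of $\tilde Q$ are $\tilde q_{ij}=q_{ij}/\sqrt{q_{ii}q_{jj}}$; the reduction conditions give $2|q_{ij}|\leq q_{ii}$ for $i<j$, which together with $a\leq b\leq c\leq d$ forces $|\tilde q_{ij}|\leq\tfrac12$, and since the sign of each $\tilde q_{ij}$ matches that of $q_{ij}$, the form $\tilde Q$ still has a non-positive off-diagonal entry.

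Next I would exploit the hypothesis through a pointwise inequality on $\sphere$. Let $(p,q)$ be the position of the non-positive entry, and let $Q_1^{(p,q)}$ be the WR form with $0$ in position $(p,q)$ and $\tfrac12$ elsewhere; by the positional symmetry noted in the text, $I(Q_1^{(p,q)})=I(Q_1)$ and $\det Q_1^{(p,q)}=\det Q_1=\tfrac14$. On $\sphere$ every $x_i\geq0$, hence $x_ix_j\geq0$; using $\tilde q_{ij}\leq\tfrac12$ for all pairs and $\tilde q_{pq}\leq0$ gives $x^{t}\tilde Q x\leq x^{t}Q_1^{(p,q)}x$ pointwise. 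Raising to the power $-2$ and integrating yields $I(\tilde Q)\geq I(Q_1)$, and therefore $I(Q)\geq(abcd)^{-1/2}I(Q_1)$.

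Finally I would bring in the determinant. Theorem 2.2 b) gives $\det Q\geq\frac{abcd}{4}+\frac{acd(b-a)}{4}+\frac{abd(c-b)}{4}+\frac{abc(d-c)}{4}+\frac{a^{2}(b-c)^{2}}{16}$, and since $0<a\leq b\leq c\leq d$ every correction term is non-negative, so $\det Q\geq\frac{abcd}{4}$. Combining,
$$\omega_Q=\frac{\sqrt{\det Q}}{A_3}I(Q)\geq\frac{\tfrac12\sqrt{abcd}}{A_3}\cdot\frac{I(Q_1)}{\sqrt{abcd}}=\frac{I(Q_1)}{2A_3}=\frac{\sqrt{\det Q_1}}{A_3}I(Q_1)=\omega_{Q_1}.$$
The step I expect to require the most care is ensuring the two estimates line up: the factor $(abcd)^{-1/2}$ produced by the diagonal normalization must exactly cancel the $\sqrt{abcd}$ coming from $\det Q\geq abcd/4$. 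This cancellation is what makes it legitimate to bound $\det$ and $I$ \emph{separately}, thereby sidestepping the genuinely delicate joint dependence of $\omega_Q$ on the off-diagonal entries (where shrinking an entry below $0$ raises $I$ but lowers $\det$). The only place the hypothesis enters is the inequality $\tilde q_{pq}\leq0$ used in the pointwise bound.
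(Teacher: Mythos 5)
Your proposal is correct and follows essentially the same route as the paper: both arguments rescale the basis vectors to normalize the diagonal (the paper normalizes all diagonal entries to $d$, you normalize to $1$, which is the same computation), bound $\det(Q)$ below by $\frac{abcd}{4}$ via Theorem 2.2 b), and use the pointwise comparison $x^{t}Qx \leq x^{t}Q_1 x$ on the positive orthant, dropping the cross term at the non-positive entry. The cancellation of $\sqrt{abcd}$ that you flag as delicate is exactly what happens in the paper's displayed chain $\omega_Q \geq \frac{\sqrt{abcd}}{2A_3}\sqrt{\frac{d^3}{abc}}\int_{\sphere}(x^{t}Q'x)^{-2}\,\mathrm{d}s = \frac{d^2}{2A_3}\int_{\sphere}(x^{t}Q'x)^{-2}\,\mathrm{d}s$.
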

\begin{proof}
This goes similar to the proof of Theorem~\ref{A3min}. By the condition 4a) and Theorem~\ref{Barlemma}b), we have
\[
\sqrt{\det(Q)}\geq \frac{\sqrt{abcd}}{2}.
\]
Let $M = \diag[\frac{\sqrt{d}}{\sqrt{a}}, \frac{\sqrt{d}}{\sqrt{b}}, \frac{\sqrt{d}}{\sqrt{c}}, 1]$ be a diagonal matrix and $Q' = M Q M$. By the property mentioned at the beginning of Section~\ref{Secdim3}, we have $\int_{\S} (x^{t} Q' x)^{-2} ds = \sqrt{\frac{abc}{d^3}} \int_{\S} (x^{t} Q x)^{-2} ds$. 
From \eqref{normsolidform}, we have:
\begin{align*}
\Omega_Q &= \sqrt{\det{Q}} \int_{\S} (x^{t} Q x)^{-2} dx \\
&\geq \frac{\sqrt{abcd}}{2} \sqrt{\frac{d^3}{abc}} \int_{\sphere} {(x^{t}Q'x)^{-2}}ds=\frac{d^2}{2}\int_{\sphere} {(x^{t}Q'x)^{-2}}ds.
\end{align*}
The new Gram matrix $Q'$ has all diagonal entries equal to $d$, each off-diagonal entry is at most $\frac{d}{2}$ and more importantly one such entry, say $a_1$, is non-positive. Therefore:
$$(x^{t}Q'x)^2 \leq d^2\left({x_1}{x_3} + {x_1}{x_4} + {x_2}{x_3} + {x_2}{x_4} + {x_3}{x_4} + \sum_{i=1}^{4}{x_i}^2\right)^2 = (d x^t Q_0 x)^2.$$ 
And so:
\[
\Omega_Q \geq \frac{d^2}{2} \int_{\sphere}{(d x^{t}{Q_0}x)^{-2}}ds \ge \frac{1}{2} \int_{\sphere}{( x^{t}{Q_0}x)^{-2}}ds = \Omega_{Q_0}.
\]
\end{proof}

By this result, we can narrow down our search to forms with all non-negative entries. This significantly reduces the number reduction conditions. It can be easily checked that all the reduction conditions similar to those in 3a)-3c) are now satisfied, and also all five vectors 
\begin{gather*}
\{(v_1+v_2+v_3+v_4),(-v_1-v_2-v_3+v_4),\\
(-v_1+v_2+v_3+v_4),(v_1-v_2+v_3+v_4),(v_1+v_2-v_3+v_4)\} 
\end{gather*}
have norm at least that of $v_4$. So there are 12 remaining reduction conditions and we rearrange them as:

\smallskip

\begin{itemize}
\item[4a)] $a\leq b\leq c\leq d$; $0 \leq a_i \leq \frac{a}{2}$; $0 \leq b_i \leq \frac{b}{2}$; $0 \leq c_i \leq \frac{c}{2}$.
\item[4b)] $(a+b+c)+2(a_1+c_1)-2(a_2+b_2+a_3+b_1) \geq 0$;\\
$(a+b+c)+2(a_2+b_2)-2(a_1+c_1+a_3+b_1) \geq 0$; \\
$(a+b+c)+2(a_3+b_1)-2(a_1+c_1+a_2+b_2) \geq 0$.
\end{itemize}

\smallskip

It should be noticed that in the last three inequalities, the 6 off-diagonal entries are now grouped into three pairs $(a_1,c_1)$, $(a_2,b_2)$ and $(a_3,b_1)$. This observation is important for many results following afterwards.

$$Q = \begin{pmatrix}
a & {\textcolor{green}{a_1}} & {\textcolor{red}{a_2}} & {\textcolor{blue}{a_3}} \\
{\textcolor{green}{a_1}} & b & {\textcolor{blue}{b_1}} & {\textcolor{red}{b_2}} \\
{\textcolor{red}{a_2}} & {\textcolor{blue}{b_1}} & c & {\textcolor{green}{c_1}} \\
{\textcolor{blue}{a_3}} & {\textcolor{red}{b_2}} & {\textcolor{green}{c_1}} & d
\end{pmatrix} $$

\medskip

\begin{lem}\label{WRgood}
In $\R^4$, the minimal solid angle is attained among WR forms.
\end{lem}
\begin{proof}
Given a reduced basis with Gram matrix $Q$ satisfying 4a)-4b), we rescale the basis vectors to be of equal length and prove that the resulting WR form  $Q$ is still reduced. This leaves the solid angle $\Omega_Q$ unchanged. First, scale $v_4$ by a factor of $\frac{\sqrt{c}}{\sqrt{d}}$. Thus $d \rightarrow c$ and 
\[
\left( a_1,a_2,a_3,b_1,b_2,c_1 \right) \to \left( a_1,a_2,\sqrt{\frac{c}{d}}a_3,b_1,\sqrt{\frac{c}{d}}b_2,\sqrt{\frac{c}{d}}c_1 \right).
\]
This decreases the magnitude of $c_1,b_2,a_3$ and so the inequalities in 4a) still hold. 

We need to verify that the inequalities in 4b) still hold. For the first inequality in 4b), since $a+b+c+2a_1 - 2a_2 - 2b_1 \geq 0$, if $2c_1 - 2a_3 - 2b_2 \geq 0$ then 
\[
a+b+c + (2a_1 + 2\sqrt{\frac{c}{d}}c_1) - (2a_2 + 2\sqrt{\frac{c}{d}}b_2 + 2\sqrt{\frac{c}{d}}a_3 + 2b_1) \geq 0.
\]
Otherwise, assume $2c_1 - 2a_3 - 2b_2 < 0$. We also have 
\[
1 \ge \sqrt{\frac{c}{d}}  \text{ and }  \sqrt{\frac{c}{d}}(2c_1 - 2a_3 - 2b_2) \ge 2c_1 - 2a_3 - 2b_2.
\] 
And so 
\begin{gather*}
a+b+c + (2a_1 + 2\sqrt{\frac{c}{d}}c_1) - (2a_2 + 2\sqrt{\frac{c}{d}}b_2 + 2\sqrt{\frac{c}{d}}a_3 + 2b_1) \geq \\
\geq a+b+c+2(a_1+c_1)-2(a_2+b_2+a_3+b_1) \geq 0.
\end{gather*}
We see that the first inequality in 4b) holds in any case. Similar arguments can verify the other two inequalities.

Now we can assume that $d=c$ and $Q = [a,a_1,a_2,a_3; \; b,b_1,b_2; \; c,c_1; \; c]$ satisfies conditions 4a)-4b). Next, scale $v_1$ by a factor of $\sqrt{\frac{b}{a}}$ so that $a \rightarrow b$ and 
\[
(a_1,a_2,a_3,b_1,b_2,c_1) \rightarrow \left( \sqrt{\tfrac{b}{a}}a_1,\sqrt{\tfrac{b}{a}}a_2,\sqrt{\tfrac{b}{a}}a_3, b_1, b_2, c_1 \right). 
\]
Since $a_i \leq \frac{a}{2}$ and $a \leq b$, we have$\sqrt{\frac{b}{a}}a_i \leq \frac{b}{2}$ and so 4a) still holds. For the first inequality in 4b), first note that:
\begin{align*}
&( b - 2\sqrt{\tfrac{b}{a}}a_2 - 2\sqrt{\tfrac{b}{a}}a_3 ) - (a - 2a_2 - 2a_3) \\
&= a(\tfrac{b}{a}-1) - 2a_2 ( \sqrt{\tfrac{b}{a}}-1 ) - 2a_3 ( \sqrt{\tfrac{b}{a}}-1 ) \\
&= (\sqrt{\tfrac{b}{a}}-1)(a(\sqrt{\tfrac{b}{a}}+1) - 2a_2 - 2a_3) \\
&\geq (\sqrt{\tfrac{b}{a}}-1)(2a - 2a_2 - 2a_3) \geq 0,
\end{align*}
i.e., $(b - 2\sqrt{\tfrac{b}{a}}a_2 - 2\sqrt{\tfrac{b}{a}}a_3) \ge (a - 2a_2 - 2a_3)$. Since also $\sqrt{\frac{b}{a}}a_1 \geq a_1$, we have 
\begin{align*}
&(b+b+c) + 2 ( \sqrt{\tfrac{b}{a}}a_1 + c_1 ) - 2 ( \sqrt{\tfrac{b}{a}}a_2 + b_2 + \sqrt{\tfrac{b}{a}}a_3 + b_1 ) \\
= \; &(b+c) + ( b - 2\sqrt{\tfrac{b}{a}}a_2 - 2\sqrt{\tfrac{b}{a}}a_3 )  + 2\sqrt{\tfrac{b}{a}}a_1 + 2c_{1} - 2b_{2} - 2b_{1} \\
= \; &(b+c) + ( a - 2a_{2} - 2a_{3} )  + 2a_{1} + 2c_{1} - 2b_{2} - 2b_{1} \\
\geq \; &(a+b+c) + 2(a_1 + c_1) - 2(a_2 + b_2 + a_3 + b_1) \geq 0. 
\end{align*}
So the first inequality in 4b) still holds. We can verify the other two equalities of 4b) in a similar manner and confirm that $Q$ is still reduced. 

So now we can assume $a=b$, $c=d$ and $Q = [b, a_1,a_2,a_3; \; b,b_1,b_2; \; c,c_1; \; c]$ satisfies conditions 4a)-4b). The last step is scaling both $v_1$ and $v_2$ up by a factor of $\sqrt{\frac{c}{b}}$. Hence $b \rightarrow c$, $a_1 \rightarrow \frac{c}{b}a_1$ and $(a_2,a_3,b_1,b_2) \rightarrow (\sqrt{\frac{c}{b}}a_2,\sqrt{\frac{c}{b}}a_3,\sqrt{\frac{c}{b}}b_1,\sqrt{\frac{c}{b}}b_2)$. Observe that all the off-diagonal entries do not decrease in magnitude. Like the previous steps, we can easily prove that $\frac{c}{b}a_1,\sqrt{\frac{c}{b}}a_2,\sqrt{\frac{c}{b}}a_3,\sqrt{\frac{c}{b}}b_1,\sqrt{\frac{c}{b}}b_2 \leq \frac{c}{2}$. This means 4a) holds for the resulting WR from. It is not hard to prove that:
\begin{align*}
c - \sqrt{\tfrac{c}{b}}(a_2 + a_3 + b_1 + b_2) &\geq b - (a_2+a_3+b_1+b_2) \\
c - \tfrac{c}{b}a_1 - \sqrt{\tfrac{c}{b}}a_3 - \sqrt{\tfrac{c}{b}}b_1 &\geq b - a_1 - a_3 - b_1 \\
c - \tfrac{c}{b}a_1 - \sqrt{\tfrac{c}{b}}a_2 - \sqrt{\tfrac{c}{b}}b_2 &\geq b - a_1 - a_2 - b_2. 
\end{align*}
So the LHS of each inequality in 4b) increases. This implies that all the reduction conditions still hold. Normalizing all vectors to have length 1, we get a proper reduced WR form.
 
\end{proof}

By this lemma, we can restrict our investigation to WR forms, and normalize the WR form $Q$ to make all of the diagonal entries 1. Thus, 
\[
Q = [1,a_1,a_2,a_3;\ 1, b_1,b_2;\ 1,c_1;\ 1]
\] 
and the second-type reduction conditions now read:

\smallskip

\begin{itemize}
\item[4b1)] $0 \leq a_1,a_2,a_3,b_1,b_2,c_1 \leq \frac{1}{2}$.
\item[4b2)] $3+2(a_1+c_1)-2(a_2+b_2+a_3+b_1) \geq 0$, \\
$3+2(a_2+b_2)-2(a_1+c_1+a_3+b_1) \geq 0$, \\
$3+2(a_3+b_1)-2(a_1+c_1+a_2+b_2) \geq 0$.
\end{itemize}

\smallskip

We can see that the six elements $a_1,a_2,a_3,b_1,b_2,c_1$ are indeed equivalent via symmetry. Back to minimizing the solid angle, we fix $\diag(Q) = [1,1,1,1]$. The two Lemmas \ref{tech1} and \ref{tech2} help us find a form $Q'$ with $\diag(Q') = \diag(Q)$ and $\det(Q') \le \det(Q)$. If in addition all entries in $Q'$ are not less than those corresponding in $Q$, then $x^t {Q'}x \geq x^t Qx$ for any $ x\in \sphere$, and so $\Omega_{Q'} \leq \Omega_{Q}$ by equation \eqref{normsolidform}. 
\medskip

We can deduce that when $Q$ has the smallest $\Omega_Q$, we must have $a_1 + c_1 \geq \frac{1}{2}$. For otherwise, by Lemma~\ref{tech1}, the form
\[
Q'=[1,\frac{1}{2}-c_1,\frac{1}{2},\frac{1}{2};\; 1,\frac{1}{2},\frac{1}{2};\; 1,c_1;\; 1]
\]
would have $\Omega_{Q'} < \Omega_{Q}$ according to the previous paragraph's reasoning. By symmetry, we also know that $a_2 + b_2, a_3 + b_1 \geq \frac{1}{2}$. Applying the argument in Theorem~\ref{minonboundary} to the polytope defined by conditions 4b1) and 4b2), we know that at least one condition in $\text{4b}_{1}$) or $\text{4b}_{2}$) must attain equality when $\Omega_Q$ is minimum. Assume it is a condition in $\text{4b}_{2}$), say $3+2(a_1+c_1)-2(a_2+b_2+a_3+b_1) = 0$, since $a_1 + c_1 \geq \frac{1}{2} \geq a_2$ and $3 \geq 2(b_2 + a_3 + b_1)$, it must be that $a_1 + c_1 = \frac{1}{2}$ and $a_2=b_2=a_3=b_1=\frac{1}{2}$. On the other hand, if a condition in $\text{4b}_{1}$) attains equality, we can say it is either $a_1=0$ or $a_1=\frac{1}{2}$. If $a_1=0$, by Theorem~\ref{negisbad} we know that $\Omega_Q \geq \Omega_{Q_0}$, and so it is only necessary to consider when $a_1=\frac{1}{2}$. 
\medskip

All the cases in the above analysis leads to forms with at least one entry equal $\frac{1}{2}$. WLOG, we can assume $a_1 = \frac{1}{2}$, and also $a_2 + b_2, a_3 + b_1 \geq \frac{1}{2}$. If $c_1 > \frac{1}{4}$, Lemma~\ref{tech2}b) implies $Q'=[1,\frac{1}{2},a_2,a_3;\; 1,b_1,b_2;\; 1,\frac{1}{2} ;\; 1]$ has $\Omega_{Q} \geq \Omega_{Q'}$. If $c_1 \leq \frac{1}{4}$, Lemma~\ref{tech2}a) implies $Q''=[1,\frac{1}{2},\frac{1}{2},\frac{1}{2};\; 1,\frac{1}{2},\frac{1}{2};\; 1,c_1;\; 1]$ has  $\Omega_{Q} \geq \Omega_{Q''}$. So now we can restrict to forms with $a_1=c_1=\frac{1}{2}$ ($Q''$ has $a_2=b_2=\frac{1}{2}$, which is equivalent to $a_1=c_1=\frac{1}{2}$ after reordering the basis). By symmetry, we can assume 
\begin{equation}\label{order}
a_3 + b_1 \geq a_2 + b_2 \geq \frac{1}{2}.
\end{equation}
Taking partial derivatives of $\det(Q)$ as a function in $a_3,b_1,a_2,b_2$, we have:
\begin{align*}
&\frac{\partial \det(Q)}{\partial a_3} + \frac{\partial \det(Q)}{\partial b_1} \\
= \;\; &2(a_2 + b_2) - \tfrac{5}{2}(a_3 + b_1) + 2({a_3}{b_1}^2 + {b_1}{a_3}^2) - 2(a_2a_3b_2 + a_2b_1b_2) \\[0.4em]
< \;\; &2(a_2+b_2) - 2(a_3+b_1) - \tfrac{1}{2}(a_3 + b_1) + 2 a_3 b_1 (a_3 + b_1) \\[0.4em]
\leq \;\; &-\tfrac{1}{2}(a_3 + b_1) + 2\tfrac{1}{2}\tfrac{1}{2}(a_3+b_1) = 0.
\end{align*}
Therefore, increasing one of $a_3$ or $b_1$ will decrease $\det(Q)$ and also decrease the solid angle's measure. This can be continued until one of them, say $a_3$, reaches $\frac{1}{2}$. By another application of Lemma~\ref{tech2}, we can further simplify $Q$ so that $b_1=a_3=\frac{1}{2}$. Finally, we have $a_1 = c_1 = a_3 = b_1 = \frac{1}{2}, a_{2} + b_{2} \ge \frac{1}{2}$ and $\Omega_Q$ is now a 2-variable function depending only on $a_2$ and $b_2$. Unfortunately, we cannot reapply the partial derivative argument because of assumption \eqref{order}. The domain for this function is depicted below as the shaded triangular region.

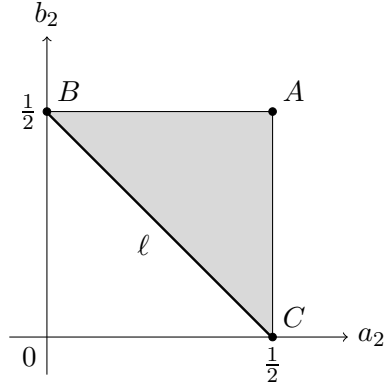
\begin{figure}[!h]
\centering
\begin{tikzpicture}[scale=1]
\draw (0,0) node[below left] {$0$};
\draw[->] (-0.5,0) -- (4,0) node[right] {$a_2$};
\draw[->] (0,-0.5) -- (0,4) node[above] {$b_2$};
\draw[line width=0.5mm] (3,0) -- (0,3);
\draw[fill=gray!30] (3,0) -- (3,3) --(0,3) -- cycle;
\draw[fill=black] (3,3) circle (0.05cm) node[above right] {$A$};
\draw[fill=black] (0,3) circle (0.05cm) node[above right] {$B$} node[left] {$\frac{1}{2}$};
\draw[fill=black] (3,0) circle (0.05cm) node[above right] {$C$} node[below] {$\frac{1}{2}$};
\draw (1.5,1.5) node[below left] {$\ell$};
\end{tikzpicture}
\caption{The reduced domain.}
\label{fig:domain}
\end{figure}

\begin{lem}\label{intlem1}
Consider a reduced WR form:
\[
Q = \left( \begin{smallmatrix} 1 & \frac{1}{2} & a & \frac{1}{2} \\ \frac{1}{2} & 1 & \frac{1}{2} & b \\ a & \frac{1}{2} & 1 & \frac{1}{2} \\ \frac{1}{2} & b & \frac{1}{2} & 1 \end{smallmatrix} \right).
\]
Keep $b$ constant and let $a$ vary in $[\frac{1}{2} - b, \frac{1}{2}]$. The minimum for $\Omega_Q$ occurs at one of the two end points.
\end{lem}
\begin{proof}
First, we have $\det(Q) = a+b-ab-a^{2}-b^{2}+a^{2}b^{2}$. Keeping $b$ constant, we prove that $\Omega_Q$, now considered as a single variable function of $a$, does not have any local minimum in $(\frac{1}{2} - b, \frac{1}{2})$. Calculations will be carried out with $\Omega^2_Q$. Assume that $\Omega^2_Q$ reaches a critical value at point $a$, we have:
\[
\frac{d\Omega^2_Q}{da} = \left( \det \int^2 \right)' = {\det}'{\int^2} + 2\det{\int}{\int}' = {\int}\left({\det}'{\int} + 2\det{\int}' \right) = 0.
\]
Here $\det$ stands for $\det(Q)$ and $\int$ stands for $\int_{\sphere}{(x^{t}Qx)^{-2}}ds$. Thus 
\[
{\det}'{\int} + 2\det{\int}' = 0.
\]
Since $\det$ and $\int$ are positive, $\det'$ and ${\int}'$ have opposite signs. The second derivative of $\Omega^2_Q$ with respect to $a$ is:
\begin{align*}
\frac{d^2 \Omega^2_Q}{d a^2} &= {\det}''{\int}^2 + 4{\det}'{\int}{\int}' + 2\det{\int}'{\int}' + 2\det{\int}{\int}'' \\
&= {\det}''{\int}^2 + 3{\det}'{\int}{\int}' + {\int}'\left( {\det}'{\int} + 2\det{\int}' \right) + 2\det{\int}{\int}'' \\
&= {\int} \left( {\det}''{\int} + 3{\det}'{\int}' + 2\det{\int}'' \right).
\end{align*}

Since $\det'$ and ${\int}'$ have opposite signs, the term $3 \det'{\int}'$ is negative. If we have $\det''{\int} + 2\det{\int}'' < 0$, then $\frac{d^2 \Omega^2_Q}{d a^2} < 0$, which means $a$ cannot be a local minimum. We show this is indeed the case. Note that $\det$ is a polynomial in $a$ with degree $2$, and $\det'' = -2(1-{b}^2)$. Note that $(1-{b}^2)$ is the determinant of $\left( \begin{smallmatrix} 1 & b \\ b & 1 \end{smallmatrix} \right)$. So $(1-{b}^2)$ is the squared area of the parallelogram formed by the two vectors $v_2$ and $v_4$. This parallelogram is in turn a 2-dimensional face of the 4-dimensional parallelepiped formed by $v_1,v_2,v_3,v_4$. Since all the four vectors have length $1$, the volume of this parallelepiped is less than or equal to the area of the parallelogram. Since $\det$ is the squared volume of the parallelepiped, this results in $-\det'' = 2(1-{b}^2) \geq 2\det$. Now it remains to prove $\int > {\int}''$. We have:
\begin{align*}
\int &=  \int_{\sphere}\frac{ds}{(1 + x_1 x_2 + x_2 x_3 + x_3 x_4 + x_1 x_4 + 2 a x_1 x_3 + 2 b x_2 x_4 )^2} \\
&\geq \int_{\sphere}\frac{ds}{(1 + x_1 x_2 + x_2 x_3 + x_3 x_4 + x_1 x_4 + x_1 x_3 + x_2 x_4 )^2} \\
&\approx 0.345503\dots
\end{align*}
and
\begin{align*}
{\int}'' &= 6\int_{\sphere}\frac{4{x_1}^2 {x_3}^2ds}{(1 + x_1 x_2 + x_2 x_3 + x_3 x_4 + x_1 x_4 + 2 a x_1 x_3 + 2 b x_2 x_4 )^4}\\
&\leq 6\int_{\sphere}\frac{4{x_1}^2 {x_3}^2ds}{(1 + x_1 x_2 + x_2 x_3 + x_3 x_4 + x_1 x_4)^4} \\
&\approx 0.215663\dots
\end{align*}
Here we used differentiation through the integral sign to compute ${\int}''$. 

\end{proof}

The previous lemma is also applicable if we consider $\Omega_Q$ as a function of $b_2$ with $a_2$ being fixed. Hence, it tells us that the minimum for $\Omega_Q$ must occur either on the segment $\ell$ or at the point $A$ in Figure~\ref{fig:domain}. The next lemma ensures that $\Omega_Q$ takes smaller values at $B$ and $C$ compared to other points on $\ell$. Thus, over all, the minimum for $\Omega_Q$ should be either at $A$ or $B$ and $C$, i.e., either $\Omega_{Q_{\A_4}}$ or $\Omega_{Q_0}$.

\begin{lem}\label{intlem2}
Consider a reduced WR form:
\[
Q = \left( \begin{smallmatrix} 1 & \frac{1}{2} & a & \frac{1}{2} \\ \frac{1}{2} & 1 & \frac{1}{2} & \left(\frac{1}{2}-a\right) \\ a & \frac{1}{2} & 1 & \frac{1}{2} \\ \frac{1}{2} & (\frac{1}{2} - a) & \frac{1}{2} & 1 \end{smallmatrix} \right).
\]
Let $a$ vary in $[0,\frac{1}{2}]$, the minimum for $\Omega_Q$ occurs at the two end points.
\end{lem}
\begin{proof}
We argue that $\Omega^2_Q$ has no local minimum $a \in (0,\frac{1}{2})$. Assume $\frac{d \Omega^2_Q}{da}=0$, we show that $\frac{d^2 \Omega^2_Q}{da^2} < 0$. As before, we would need to prove $-\det(Q)''\int > 2\det(Q) \int''$. In this case:
\begin{align*}
\det(Q) &= a^4 - a^3 - \tfrac{3a^2}{4} + \tfrac{a}{2} + \tfrac{1}{4} \\[0.4em]
&= (a-\tfrac{1}{4})^4 - \tfrac{9}{8}(a-\tfrac{1}{4})^2 + \tfrac{81}{256},
\end{align*}
and
\begin{align*}
\det(Q)'' &= 12(a-\tfrac{1}{4})^2 - \tfrac{9}{4}.\qquad\qquad\;\;\;\;
\end{align*}
It is easy to check that $-\det''(Q) \geq 6\det(Q)$:
\begin{align*}
-\det(Q)'' - 6\det(Q) &= \tfrac{9}{4} - 12(a-\tfrac{1}{4})^2 - 6((a-\tfrac{1}{4})^4 - \tfrac{9}{8}(a-\tfrac{1}{4})^2 + \tfrac{81}{256})\\[0.4em]
&= \tfrac{45}{128} - 6(a-\tfrac{1}{4})^4 - \tfrac{21}{4}(a-\tfrac{1}{4})^2\\[0.4em]
&\geq \tfrac{45}{128} - 6(\tfrac{1}{4})^4 - \tfrac{21}{4}(\tfrac{1}{4})^2 = 0.
\end{align*}
And thus it remains to prove $\int > \frac{1}{3}{\int}''$:
\begin{align*}
\int &= \int_{\sphere}\frac{ds}{(1 + x_1 x_2 + x_2 x_3 + x_3 x_4 + x_1 x_4 + 2 a x_1 x_3 + 2 (\frac{1}{2}-a)x_2 x_4 )^2}\\
&\geq \int_{\sphere}\frac{ds}{(1 + x_1 x_2 + x_2 x_3 + x_3 x_4 + x_1 x_4 + x_1 x_3 + x_2 x_4 )^2} \\
&\approx 0.345503
\end{align*}
and
\begin{align*}
\frac{1}{3}{\int}'' &= 2\int_{\sphere}\frac{4(x_1 x_3 - x_2 x_4)^2ds}{(1 + x_1 x_2 + x_2 x_3 + x_3 x_4 + x_1 x_4 + 2 a x_1 x_3 + 2 (\frac{1}{2}-a) x_2 x_4 )^4} \\
&\leq 2\int_{\sphere}\frac{4(x_1 x_3 - x_2 x_4)^2 ds}{(1 + x_1 x_2 + x_2 x_3 + x_3 x_4 + x_1 x_4)^4} \\
&\approx 0.0773524
\end{align*}
\end{proof}

\begin{theo}
Any rank $4$ lattice has a reduced basis with $\Omega_{Q_{\A_4}} \leq \Omega_Q \leq \frac{S_3}{16} = \frac{\pi^2}{8}$.
\end{theo}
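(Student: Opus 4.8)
The plan is to prove the two inequalities separately: the upper bound is essentially the four-dimensional version of Corollary 3.3, while the lower bound is the place where all the machinery of this section is assembled. For the upper bound I would argue exactly as in the rank-$3$ case. Starting from any reduced basis $v_1,\dots,v_4$, the $2^4=16$ cones obtained from the sign choices $\pm v_1,\dots,\pm v_4$ tile $\R^4$ up to boundaries of measure zero, so their normalized solid angles sum to $1$; hence at least one has measure at most $\frac{1}{16}$. Since negating a basis vector is a unimodular change of coordinates leaving the basis Minkowski-reduced, the chosen cone still comes from a reduced form, giving $\omega_Q\le\frac{1}{16}$. Equivalently, after this sign reduction all off-diagonal entries are non-negative and Corollary 3.1 applies directly.

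For the lower bound I would show that $\omega_{Q_0}$ is the global minimum over \emph{all} reduced rank-$4$ forms, so that it holds in particular for the basis selected above. First, by Theorem 4.1 any form with a non-positive off-diagonal entry already satisfies $\omega_Q\ge\omega_{Q_1}$, so it suffices to treat forms with all off-diagonal entries non-negative; and by Theorem 4.2 it suffices to search among WR forms, which I normalize to have unit diagonal. The analysis following Theorem 4.2 then invokes Lemmas 2.3 and 2.4 to force any minimizer to have $a_1=c_1=a_3=b_1=\tfrac12$, reducing $\omega_Q$ to a function of the two remaining variables $(a_2,b_2)$ on the shaded triangle of Figure 1.

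On this triangle, Lemma 4.3 rules out interior minima along each coordinate direction, pushing the minimum onto the hypotenuse $\ell$ or the vertex $A$, and Lemma 4.4 shows that along $\ell$ the minimum is attained at the endpoints $B,C$. Consequently the global minimum of $\omega_Q$ is attained either at $A$, where $Q=Q_0$, or at $B,C$, where $Q$ is equivalent to $Q_1$. It then remains only to decide between these two explicit candidates.

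This final comparison is where the real obstacle lies. Using $\textnormal{det}(Q_0)=\tfrac{5}{16}$, $\textnormal{det}(Q_1)=\tfrac14$ and the two spherical integrals, the desired inequality $\omega_{Q_0}<\omega_{Q_1}$ unwinds to $\sqrt{5}\int_{\sphere}(x^tQ_0x)^{-2}\,\mathrm{d}s < 2\int_{\sphere}(x^tQ_1x)^{-2}\,\mathrm{d}s$, i.e.\ (using the value $\int_{\sphere}(x^tQ_0x)^{-2}\,\mathrm{d}s\approx 0.3455$ already recorded) to the numerical statement $\int_{\sphere}(x^tQ_1x)^{-2}\,\mathrm{d}s>0.386$. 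Neither integrand has a closed-form antiderivative over the positive orthant of $\sphere^3$, so the main difficulty is a rigorous numerical estimation of these two four-dimensional integrals with error bars tight enough to separate the two values. Once that strict inequality is certified, $\omega_{Q_0}$ is the global minimum, the bound $\omega_{Q_0}\le\omega_Q$ holds for every reduced basis, and combined with the upper bound of the first paragraph this completes the proof.
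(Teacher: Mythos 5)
Your proposal follows essentially the same route as the paper: the upper bound comes from the sixteen sign-flipped cones tiling $\R^4$ (so one has normalized measure at most $\tfrac{1}{16}$, and negation preserves reducedness), and the lower bound assembles Theorems 4.1--4.2 and Lemmas 2.3, 2.4, 4.3, 4.4 exactly as the paper does, reducing everything to the comparison $\omega_{Q_0}<\omega_{Q_1}$, which the paper likewise settles only numerically in Section 5 (your unwinding to $\sqrt{5}\int_{\sphere}(x^tQ_0x)^{-2}\,\mathrm{d}s < 2\int_{\sphere}(x^tQ_1x)^{-2}\,\mathrm{d}s$ is consistent with the reported values). The one inaccurate aside is the claim that the sign reduction is ``equivalently'' an application of Corollary 3.1: in $\R^4$ one cannot in general flip signs to make all off-diagonal Gram entries non-negative, but this remark is dispensable since the tiling argument already yields $\omega_Q\le\tfrac{1}{16}$.
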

\begin{proof}
Take a reduced basis for $\lattice$. We can switch some basis vectors to their negatives to make sure that $\bar{\Omega}_Q \leq \frac{1}{16}$. This gives the upper bound. By the previous two lemmas, the lower bound is certain if we can show $\Omega_{Q_{\A_4}} < \Omega_{Q_0}$. This will be verified numerically in the next section, together with a 5-dimensional example.

\end{proof}

\medskip

\section{A counter-example in $\R^5$ and some discussion}\label{Secdim5}

In order to finish Theorem 4.5, we need to compare $\Omega_{Q_{\A_4}}$ and $\Omega_{Q_0}$ numerically. Besides, we also have to verify the integral values in Lemma~\ref{intlem1} and \ref{intlem2}. Using spherical coordinates in $\R^4$, we can take:
\begin{align*}
x_1 &= \cos \alpha\\
x_2 &= \sin \alpha \cos \beta\\
x_3 &= \sin \alpha \sin \beta \cos \gamma\\
x_4 &= \sin \alpha \sin \beta \sin \gamma
\end{align*}
with $0 \leq \alpha,\beta,\gamma \leq \frac{\pi}{2}$ and the jacobian $ds = \sin^2 \alpha\sin\beta \; d\alpha d\beta d\gamma$. We implemented this trigonometric parametrization with \textit{MATHEMATICA} to compute the values of the integrals in Lemma~\ref{intlem1} and \ref{intlem2} and also verified that:
\[
\Omega_{Q_{\A_4}} = 0.193142\ldots \; < \; \Omega_{Q_0} = 0.205617\ldots
\]

Interestingly, the situation reverses in $\R^5$ with the analogues of $Q_{\A_4}$ and $Q_0$. Take
\[
R_{\A_5} = \left( \begin{matrix} 1&\frac{1}{2}&\frac{1}{2}&\frac{1}{2}&\frac{1}{2}\\[0.3em] \frac{1}{2}&1&\frac{1}{2}&\frac{1}{2}&\frac{1}{2}\\[0.3em] \frac{1}{2}&\frac{1}{2}&1&\frac{1}{2}&\frac{1}{2}\\[0.3em] \frac{1}{2}&\frac{1}{2}&\frac{1}{2}&1&\frac{1}{2}\\[0.3em] \frac{1}{2}&\frac{1}{2}&\frac{1}{2}&\frac{1}{2}&1 \end{matrix} \right) \qquad \text{and} \qquad R_0 = \left( \begin{matrix} 1&0&\frac{1}{2}&\frac{1}{2}&\frac{1}{2}\\[0.3em] 0&1&\frac{1}{2}&\frac{1}{2}&\frac{1}{2}\\[0.3em] \frac{1}{2}&\frac{1}{2}&1&\frac{1}{2}&\frac{1}{2}\\[0.3em] \frac{1}{2}&\frac{1}{2}&\frac{1}{2}&1&\frac{1}{2}\\[0.3em] \frac{1}{2}&\frac{1}{2}&\frac{1}{2}&\frac{1}{2}&1 \end{matrix} \right),
\]
then both forms are reduced and: 
$$\Omega_{\A_5} = 0.0505862\ldots \; > \; \Omega_{R_0} = 0.0479361\ldots$$
Thus, among all reduced forms, those of the alternating lattices produce the smallest solid angles for dimensions less than 5 but not higher. Similar to $Q_0$, $R_0$ is known to have the smallest determinant among all WR forms.
\medskip

Let us also briefly discuss the intuition behind Lemma~\ref{intlem1} and \ref{intlem2}. The absence of local minima for $\Omega_Q$, considered as a univariate function, within the open interval can be interpreted as $\Omega_Q$ being locally quasi-concave. To a somewhat greater extent, the method employed in these two lemmas are also adequate to prove quasi-concavity for a univariate $\Omega_Q$, without assuming that $Q$ is WR or reduced. If we consider $\Omega_Q$ as a multivariate function however, naive differentiation does not seem enough to establish global quasi-concavity. Such a result, if settled, may shed some light on the behavior of \textit{spherical volumes} in higher dimensional spherical geometry. 
\medskip

Lemma~\ref{WRgood} was also an important step in our proof. It essentially says that any rank 4 reduced form $Q$ can be normalized to a WR form and still remains reduced. We wonder if such a similar result still holds in higher dimension
\medskip

\begin{ques}
Is the minimum for $\Omega_Q$ always located among WR forms in any dimension?
\end{ques}

Lastly, we would like to revisit Corollary~\ref{minor}, which says that the solid angle does not exceed $\frac{1}{2^n}$ for any basis with non-obtuse pairwise angles. One can ask a more direct question: 

\begin{ques}
Is it always possible to completely embed any such basis into the positive orthant of $\R^n$?
\end{ques}

By embedding we mean simultaneously rotating all the basis vectors with an orthogonal transformation. Geometric intuition tells us the affirmative, which is obvious up to at least dimension 3. Fortunately, the full answer is known: YES, such an embedding exists if $n < 5$, but NO in general. The interested reader can find the detailed answer in the sizable literature written on this topic (ref.~\cite{BerMon}). It is interesting to see how familiar intuitions can break down when we go up in dimensions.

\end{document}